\documentclass[A4paper,10pt]{ctexart}
\usepackage{amsmath}
\usepackage{amssymb}
\usepackage{amsthm}
\usepackage{xltxtra}
\usepackage{CJKutf8}
\usepackage{texnames}
\usepackage{mflogo}
\usepackage{bm}
\usepackage{lmodern}
\usepackage{amscd}
\usepackage[all]{xy}
\usepackage[numbers,sort&compress]{natbib}
\usepackage{tikz,tikz-cd}
\usepackage{sseq}
\usepackage{cancel}
\graphicspath{{figures/}}

\usepackage{titlesec}
\usepackage{titling}

\pretitle{\begin{center}\LARGE\bfseries}
	\posttitle{\end{center}\vspace{\baselineskip}}

\usepackage[paperwidth=8.3in, paperheight=11.7in, margin=1in]{geometry}

\titleformat{\section}[block]{\Large\bfseries}{\thesection}{1em}{}[\titlerule]
\titleformat{\subsection}[block]{\large\bfseries}{\thesubsection}{1em}{}
\titleformat{\subsubsection}[runin]{\large\bfseries}{\thesubsubsection}{1em}{}[.]

\usepackage{setspace}
\usepackage{multicol}
\newtheorem{definition}{Definition}[section]
\newtheorem{theorem}{Theorem}[section]
\newtheorem{corollary}{Corollary}[section]
\newtheorem{lemma}{Lemma}[section]
\newtheorem{example}{Example}
\newtheorem{remark}{Remark}
\newtheorem{proposition}{Proposition}[section]
\newenvironment{acknowledgements}{
	\par\medskip
	\noindent\textit{Acknowledgements.}\
}{\par\medskip}
\newtheorem{claim}{Claim}
\newtheorem{problem}{Problem}[section]

\newcommand{\R}{\mathbb{R}}
\newcommand{\C}{\mathbb{C}}

\newcommand{\ii}{i}
\newcommand{\Z}{\mathbb{Z}}
\newcommand{\Qua}{\mathbb{H}}
\newcommand{\A}{\mathcal{A}}
\newcommand{\G}{\mathcal{G}}
\newcommand{\M}{\mathcal{M}}
\newcommand{\abs}[1]{\left|#1\right|}

\DeclareMathOperator{\ind}{ind}
\DeclareMathOperator{\Res}{Res}

\ctexset{
	subsection={
		format+ =\zihao{4} \heiti,
		name = {,\quad},
		number = \arabic{section}.\arabic{subsection},
		aftername=\hspace{1pt},
	}
}

\title{$Pin^{-}(2)$ Bauer--Furuta invariants}
\author{Hao Wu}
\date{}
\begin{document}
	\maketitle
	\begin{abstract}
		Adapting Bauer and Furuta's constructions of the refinement of the Seiberg--Witten invariants, we establish the analogous stable cohomotopy refinement of the $Pin^{-}(2)$ monopole invariants proposed by Nakamura \cite{nakamura2015pin}, and give the corresponding connected sum formula. 
		%This thesis provides a partial answer to remark 2.13 in \cite{nakamura2015pin}.  
	\end{abstract}
	\tableofcontents

	\section{Introduction}	
	
	In the mid-1990s, Seiberg and Witten introduced new invariants \cite{witten1994monopoles} to give new insights into the differential topology of smooth four-manifolds. The Seiberg--Witten invariants are defined by a kind of oriented counting of the gauge equivalence classes of solutions to a system of equations. Specifically, for an oriented closed Riemannian four-manifold $(X,g)$, we equip it with a $Spin^c$-structure $\mathfrak{s}$, which is given by a principal $Spin^c(4)$ bundle $P\to X$ lifting the principal frame bundle $Fr(X)\to X$, and $P\to Fr(X)$ is the adjoint representation $\rho: Spin^c(4)\to SO(4)$ on each fiber. Forming the spinor bundles $S^{\pm}$ and the determinant line bunle $L$ asscociated to $P$, we can consider the space of $S^1$ connections $\A(L)$ on $L$ and the spaces of smooth sections $\Gamma(S^{\pm})$ of $S^{\pm}$. Choose $A\in\A(L)$ and $\phi\in\Gamma(S^+)$, after lifting the connection $A$ to a $Spin^c(4)$-connection on $P$ which we still denote by $A$, we can define the so-called Seiberg-Witten equations
    \begin{equation}\label{Seiberg-Witten equations}
	    \begin{cases}
	    	D_A\phi=0\\
	        F_A^+=q(\phi),
	    \end{cases}
    \end{equation}
    for $(A,\phi)\in\A(L)\times\Gamma(S^+)=\mathcal{C}$, where $D_A$ is the Dirac operator associated to $A$ on $S^+$, $F_A^+$ is the self-dual part of the curvature term $F_A$, and $q(\phi)$ is a specific quadratic expression. Introducing the space of gauge transformations $\G$ on $P$ which cover the identity map on $Fr(X)$, we find that $\G$ is isomrphic to $\operatorname{Map}(X,S^1)$. The action is free on $\mathcal{C}^*=\A(L)\times(\Gamma(S^+)\setminus\{0\})$, and the space $\mathcal{B}^*=\mathcal{C}^*/{\G}$ is homotopy equivalent to $\C P^{\infty}\times T^{b_1(X)}$. It turns out that the equations \eqref{Seiberg-Witten equations} are invariant under the action of $\G$. The space of all solutions to \eqref{Seiberg-Witten equations} modulo gauge transformations, denoted by $\M(X,\mathfrak{s})$, is called the moduli space of monopoles. Under certain genericity assumptions, the moduli space is a smooth, orientable finite dimensional manifold, defining a homology class $[\M(X,\mathfrak{s})]$ in $\mathcal{B}^*$. The dimension of the moduli space, which we denote by $d(\mathfrak{s})$, is computed through the index theorem as $d(\mathfrak{s})=2\operatorname{ind}_{\C}(D_A)-\left(1-b_1(X)+b^+(X)\right)=\frac{c_1(L)^2-2\chi(X)-3\sigma(X)}{4}$. The Seiberg--Witten invariant is the characteristic number obtained from these data by specifying an orientation of the moduli space:
    
    \begin{equation*}
    	 SW(X,\mathfrak{s})=
    	 \begin{cases}
    	 	  \int_{\M(X,\mathfrak{s})} \xi^{\frac{d(\mathfrak{s})}{2}}, &\text{if $d(\mathfrak{s})$ is even and nonnegative},\\
    	 	  0, &\text{otherwise}
    	 \end{cases}
    \end{equation*}
    where $\xi\in H^2(\mathcal{B}^*)\cong H^2\left(\C P^{\infty}\times T^{b_1(X)}\right)$ is the generator for the $\C P^{\infty}$ factor.
    
    Later, using ideas of finite dimensional approximation, Bauer and Furuta introduced a stable cohomotopy refinement of Seiberg--Witten invariants \cite{bauerfuruta2004stable,bauer2004stable}, known as the Bauer--Furuta-invariants. Instead of taking the numerical oriented count of orbits of the solutions, they considered the monopole map itself. Fix a base point $*\in X$, define the pointed gauge group $\G_0=\{\ker(\operatorname{Map}(X,S^1)\to S^1)\}$ as the kernel of the evaluation map at $*$, then the monopole map 
    \begin{align*}
    	\mu: (A+\ker d)\times (\Gamma(S^+)\oplus\Omega^1(X)\oplus H^0(X))/{\G_0} &\to (A+\ker d)\times (\Gamma(S^-)\oplus\Omega^+(X)\oplus\Omega^0(X)\oplus H^1(X))/{\G_0}\\
    	(A,\phi,a,f) &\mapsto (A,D_{A+a}\phi,F_{A+a}^+-q(\phi),d^*a+f,pr(a))
    \end{align*}
    is a fiber-preserving, $S^1$-equivariant map over $\mathrm{Pic}^{\mathfrak{s}}(X)$. It turns out that $\mu$ defines an element in the equivariant stable cohomotopy group $\pi^{b^+}_{S^1,H}(\mathrm{Pic}^{\mathfrak{s}}(X);\operatorname{ind}(D))$. This invariant avoids the delicate transversality problems in the standard Seiberg-Witten theory. After establishing the connected sum formula $[\mu_{X}]=[\mu_{X_0}]\wedge[\mu_{X_1}]$ for $X=X_0\# X_1$, Bauer proved that this cohomotopy invariant actually encodes more information than the Seiberg-Witten invariant and it has led to new applications in $4$-manifold topology.
    
    Building on ideas of Bauer and Furuta, Manolescu \cite{manolescu2003seiberg} developed the Seiberg--Witten--Floer stable homotopy type $SWF(Y,\mathfrak{c})$ for a closed, oriented $3$-manifold $(Y,\mathfrak{c})$ with $b_1(Y)=0$ and a given $Spin^c$-structure $\mathfrak{c}$, and introduced a relative Bauer-Furuta invariant for $4$-manifolds with boundary. Later, the gluing formula \cite{manolescu2007gluing} was established for this relative invariant.

	Inspried by Fr\o yshov  \cite{froyshov20124}, in contrast to refining the Seiberg-Witten theory, Nakamura \cite{nakamura2013monopole,nakamura2015pin} introduced a variant of the Seiberg--Witten equations. Instead of lifting the structure group $SO(4)$ of the principal frame bundle to $Spin^c(4)$, he considered lifting to $Spin^{c-}(4)=Spin(4)\times_{\{\pm 1\}} Pin^{-}(2)$ and defined the analogous $Pin^{-}(2)$ monopole invariants for a double cover $\tilde{X}\to X$ with associated free involution $\iota:\tilde{X}\to\tilde{X}$. The $Pin^{-}(2)$ monopole invariants for $X$ share many similarities with the ordinary Seiberg--Witten invariants, and can be explicitly computed for some K$\ddot{a}$hler surfaces and symplectic $4$-manifolds \cite{nakamura2020real}. Later, Konno, Taniguchi, Miyazawa \cite{konno2024involutions,miyazawa2023gauge} defined an invariant for a triple $(\tilde{X},\mathfrak{s},\iota)$, where $\iota$ is a nonfree involution on the oriented, closed $4$-manifold $\tilde{X}$ with nonempty, $2$-dimensional fixed point set $\tilde{X}^{\iota}$. It is therefore natural to ask whether such an invariant exists in the case $\tilde{X}^{\iota}$ is empty.

	Our work is a combination of the Bauer--Furuta theory and the $Pin^{-}(2)$ monopole theory. By modifying the argument of Bauer--Furuta \cite{bauerfuruta2004stable}, we obtain the following analogous stable cohomotopy refinement of the $Pin^{-}(2)$ monopoles invariants:
	
	\begin{theorem}\label{main theorem1}
		The monopole map $\mu:\A\to \mathcal{C}$ defines an element in an equivariant stable cohomotopy group
		$$\pi_{\Z_2,H}^{b^+(X;l)}(\mathrm{Pic}^c(X);\ind(D)),$$
		which is independent of the chosen Riemannian metric. For $b^+(X;l)>\dim(Pic^c(X))+1$, we have a natural homomorphism of the cohomotopy group to $\Z_2$, which maps $[\mu]$ to the $\Z_2$-valued $Pin^{-}(2)$ monopole invariant.
	\end{theorem}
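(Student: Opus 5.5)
The plan is to follow Bauer--Furuta's finite-dimensional approximation scheme and carry the $\Z_2$-symmetry of the $Pin^{-}(2)$ setting through each step. The setup is as follows. The spinor bundle carries the involution coming from $Pin^{-}(2)\supset\{\pm1\}$. Forms are twisted by the real line bundle $l=\tilde X\times_{\iota}\R$. The gauge group is $\operatorname{Map}(X,S^1)$ twisted by $l$, i.e. sections of $\tilde X\times_{\pm}S^1$, and its based part $\G_0$ acts freely. The quotient of the affine space of connections modulo $\G_0$ and $\ker d$ is a bundle over $\mathrm{Pic}^c(X)=H^1(X;l\otimes\R)/H^1(X;l)$, a torus of dimension $b_1(X;l)$. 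The residual group is $\Z_2=\{\pm1\}$, acting by $-1$ on spinors and trivially on forms. After the Sobolev completion $L^2_k\to L^2_{k-1}$, the monopole map
\[
\mu(A,\phi,a,f)=\bigl(D_{A+a}\phi,\;F^+_{A+a}-q(\phi),\;d^*a+f,\;pr(a)\bigr)
\]
becomes a $\Z_2$-equivariant, fibre-preserving map of Hilbert bundles $\A\to\mathcal C$ over $\mathrm{Pic}^c(X)$. It decomposes as $\mu=L+C$, where $L$ is fibrewise linear Fredholm (the Dirac family plus $d^+\oplus d^*$ on $l$-twisted forms) and $C$ is compact and quadratic.

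\textbf{Key steps.}

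(1) \emph{Boundedness.} I will show that preimages of bounded sets under $\mu$ are bounded. This uses the Weitzenb\"ock formula and the maximum principle on $|\phi|^2$, exactly as in the $Spin^c$ case, since $q$ and the Dirac operator are locally identical. Gauge fixing by $\G_0$ together with the $H^1$ projection then gives bounds on $a$, and elliptic bootstrapping gives $L^2_k$ bounds.

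(2) \emph{Finite-dimensional approximation.} I will choose finite-rank $\Z_2$-subbundles $W'\subset\mathcal C$ transverse to the image of $L$, and set $W=L^{-1}(W')$. Then $pr_{W'}\circ\mu|_W$ is a proper map of finite-dimensional bundles, which extends to Thom spaces $Th(W)\to Th(W')$. The difference $W-W'$ is the index $\ind(D)-H^+(X;l)$, where $\ind(D)$ is a complex-hence-real $\Z_2$ bundle with sign action and $H^+(X;l)$ is a trivial representation of dimension $b^+(X;l)$. Compactness of $C$ and the boundedness from (1) show that the homotopy class stabilises as $W'$ grows. This defines $[\mu]\in\pi^{b^+(X;l)}_{\Z_2,H}(\mathrm{Pic}^c(X);\ind(D))$, with $H$ a $\Z_2$-universe.

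(3) \emph{Metric independence.} For two metrics I will join them by a path. The boundedness estimate is uniform on the compact path, so the construction gives a homotopy of approximations.

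(4) \emph{The map to $\Z_2$.} When $b^+(X;l)>\dim\mathrm{Pic}^c+1$, I will restrict to the fixed points of the $\Z_2$-action. The fixed locus of the domain is the form part, where $\mu$ is the linear injective map $a\mapsto(F^+_{A+a},\dots)$ on reducibles, whose image has codimension $b^+(X;l)$. A generic perturbation in $H^+(X;l)$ therefore avoids reducibles over the torus when $b^+(X;l)>\dim\mathrm{Pic}^c$. The extra $+1$ guarantees that any two perturbations are joined by a path that also avoids them. Off the fixed locus $\Z_2$ acts freely, so I will take $\Z_2$-equivariant transversality there. The preimage of a regular value, divided by $\Z_2$, is the moduli space $\M$ inside $\mathcal B^*\simeq \R P^\infty\times T$. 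Pairing the class of $\M$ with the appropriate power of the generator $w\in H^1(\R P^\infty;\Z_2)$ gives a homomorphism, by the usual Pontryagin--Thom argument in $\Z_2$-coefficients (no orientation is needed). This reproduces Nakamura's invariant.

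\textbf{Main obstacle.} I expect step (4) to be the hardest, specifically checking that the degree is well defined on the whole stable group and not only on $[\mu]$. The plan is to use the equivariant Pontryagin--Thom correspondence. A map that is equivariant and free away from the fixed sphere, in this dimension range, corresponds to a framed-mod-2 bordism class in $(\mathrm{Pic}\times S(\ind D))/\Z_2$ relative to the fixed part. The dimension hypothesis makes the choice of homotopy on the fixed sets contractible, and evaluating $w^{d/1}$ then gives the homomorphism.
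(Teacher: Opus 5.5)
Your outline follows the paper's route. The Weitzenb\"ock, maximum-principle and bootstrapping bound is Proposition~\ref{boundedness}. Your step (2) is what the paper obtains by quoting Bauer--Furuta's Theorem~2.6. Your step (4) is the paper's comparison map: push the fixed-point image into codimension $b^+(X;l)-b_1(X;l)\geq 2$, take a generic preimage as a free $\Z_2$-manifold, and evaluate $w_1^k$ of the sign line bundle on its quotient. There is, however, a genuine error in your setup, at exactly the point where the $Pin^{-}(2)$ case departs from the $Spin^c$ case. For a twisted structure the only constant gauge transformations are $\pm 1$. Yet evaluation at the base point $\G\to U(1)$ is still surjective: take $g=\exp(if)$ with $f$ a section of $\lambda$ that is a bump near the base point. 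So $\G/\G_0\cong U(1)$, not $\Z_2$, and $\G\neq\{\pm1\}\cdot\G_0$. Consequently $(A+\ker d)/\G_0$ is a circle bundle over $\mathrm{Pic}^c(X)$ with fibre $U(1)/\{\pm1\}$. The monopole map modulo $\G_0$ is then a $U(1)$-equivariant map over this circle bundle, not a $\Z_2$-equivariant map over $\mathrm{Pic}^c(X)$. As written, your class therefore does not land in $\pi^{b^+(X;l)}_{\Z_2,H}(\mathrm{Pic}^c(X);\ind(D))$.

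The missing ingredient is a subgroup complementary to the stabiliser $\{\pm1\}$ of the reducibles, i.e.\ a splitting $\G=\{\pm1\}\times\mathcal{K}$; the based gauge group is not one. The paper uses Nakamura's $\mathcal{K}_\gamma=\ker\bigl(\theta_\gamma:\G\to\pi_0\G_\gamma\cong\{\pm1\}\bigr)$, for a loop $\gamma$ along which $\lambda$ is nontrivial. Since $\theta_\gamma(-1)=-1$, one gets $\G=\{\pm1\}\times\mathcal{K}_\gamma$. Hence $\mathcal{K}_\gamma$ acts freely on $A+\ker d$ with quotient exactly $\mathrm{Pic}^c(X)$, and leaves precisely $\{\pm1\}$ acting by sign on spinors. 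You could instead descend from your $U(1)$-equivariant picture by an induction argument, but that needs a global slice of the circle bundle, which is what $\mathcal{K}_\gamma$ supplies.

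Two smaller corrections. First, $\ind(D)$ is not complex: $S^{\pm}$ are only real rank-$4$ bundles, because right multiplication by $i$ does not commute with $j\in Pin^{-}(2)$. So $D_A$ is only real-linear and $\ind_{\R}D$ may be odd, though only the real virtual bundle with its sign action is used. Second, $H^0(X;l)=0$ in the twisted case, so your $f$-component is vacuous. With $\G_0$ replaced by $\mathcal{K}_\gamma$, the rest of your argument goes through as in the paper.
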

	
	Adapting the argument of Bauer \cite{bauer2004stable}, one version of the connected sum formulas is obtianed as the following:
	
	\begin{theorem}\label{main theorem2}
		Let $X_1$ be a closed oriented connected $4$-manifolds with twisted $Spin^{c-}$-structure $c_1$ with $b_1(X_1;l_1)=0$. Let $X_2$ be a closed oriented connected $4$-manifolds with a $Spin^{c}$-structure $c_2$\textnormal{(untwisted $Spin^{c-}$-structure)} with $b_1(X_2)=0$. Let $X=X_1\# X_2$ have the connected sum $Spin^{c-}$-structure $c_1\# c_2$. Then the stable equivariant cohomotopy invariant is the smash product of the invariants of its summands
		$$[\mu_{X}]=[\mu_{X_1}]\wedge \Res^{S^1}_{\Z_2}[\mu_{X_2}].$$
	\end{theorem}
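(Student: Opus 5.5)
I will follow Bauer's proof of the connected sum formula, adapting each step to the $Pin^{-}(2)$ setting. Because $b_1(X_1;l_1)=0$ and $b_1(X_2)=0$, both Picard tori are points. So $[\mu_{X_1}]$ is an element of the $\Z_2$-equivariant stable cohomotopy group of spheres, and $[\mu_{X_2}]$ is an $S^1$-equivariant stable map, which we restrict to the subgroup $\Z_2=\{\pm1\}\subset S^1$. On $X_2$ the bundle $l_2$ is trivial. Hence the $Spin^{c-}$-structure $c_1\# c_2$ is obtained by gluing the twisted structure on $X_1$ to the untwisted one on $X_2$ over the neck, where both restrict to the standard structure on $S^3$. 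The residual symmetry group of the glued monopole map is only $\Z_2$, and this is why $\Res^{S^1}_{\Z_2}$ appears. I will first check that the index bundles and the twisted harmonic spaces add up:
$$\ind(D_X)=\ind(D_{X_1})\oplus\ind(D_{X_2}),\qquad H^+(X;l)\cong H^+(X_1;l_1)\oplus H^+(X_2).$$
Both follow from excision and a Mayer--Vietoris argument for the local system $l$, since $l$ is trivial near the gluing region. This shows that the two sides of the formula live in the same group.

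Next I set up the stretched-neck comparison. Fix metrics with long cylindrical necks $[-T,T]\times S^3$ on $X_1\setminus B$ and $X_2\setminus B$, and glue them to get $X_T$. Form the disjoint union $X_1\sqcup X_2$. As in Bauer, the key observation is that
$$X_1\sqcup X_2 \quad\text{and}\quad X_T\sqcup S^4$$
are related by a cut-and-paste along two copies of $S^3$. Here $S^4$ carries the untwisted structure, and its Bauer--Furuta invariant is the identity: it has positive scalar curvature, $b^+=0$, and index zero. So it suffices to prove that the monopole maps of $X_1\sqcup X_2$ and of $X_T\sqcup S^4$ define the same $\Z_2$-equivariant stable homotopy class. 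The map for the disjoint union is the product $\mu_{X_1}\times\mu_{X_2}$. Its class is the smash product, where $\mu_{X_2}$ is regarded as $\Z_2$-equivariant through restriction. The $\Z_2$-action is by $-1$ on the spinors of both pieces, and by the involution-twisted action on the $X_1$ forms.

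The homotopy is the core of the proof. On the disjoint union I write the linear part as a family of Fredholm operators, using cut-off functions supported near the necks. I then rotate the gluing data from "identity on each side" to "swap across the necks." This is a $\Z_2$-equivariant homotopy of Fredholm maps of the form linear plus compact. I must verify two things:
\begin{enumerate}
\item the rotation commutes with the $\Z_2$-action, which holds because the gluing identification on $S^3$ is $\Z_2$-equivariant once $l$ is trivialized over the neck region;
\item the quadratic part stays compact and the family is uniformly proper.
\end{enumerate}
Together these give an equivariant homotopy of the associated stable maps. Finite-dimensional approximation then gives equality of the classes, exactly as in the proof of Theorem \ref{main theorem1}.

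The main obstacle is \emph{properness}, namely uniform a priori bounds for solutions throughout the deformation. These must be independent of $T$ for large $T$, and must hold at intermediate rotation parameters, where the equations are not the honest monopole equations. I would proceed in three steps:
\begin{enumerate}
\item Use Bauer's exponential decay estimates on the cylinder $[-T,T]\times S^3$. Since $S^3$ with the round metric has positive scalar curvature and $b_1=0$, the only finite-energy translation-invariant solution is the trivial reducible one.
\item Deduce that solutions are exponentially close to zero in the middle of the neck.
\item Conclude that the cut-off errors are of size $O(e^{-\delta T})$, uniformly in the rotation parameter.
\end{enumerate}
The twisted bundle $l$ does not interfere, since it is trivial on the neck. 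The only new check is that the Weitzenböck estimate for the $Pin^{-}(2)$ Dirac operator gives the same pointwise bound on $|\phi|$; this holds because the curvature term of the $l$-twisted $O(2)$-connection contributes to the Weitzenböck formula in the same way as the $U(1)$ case.
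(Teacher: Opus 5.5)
\textbf{There is a genuine gap: the cut-and-paste you choose cannot be realised by a rotation.} Your overall strategy is the paper's (and Bauer's). You compare the monopole map of a disjoint union with that of a cut-and-pasted union, deforming through proper Fredholm maps and rotating the gluing data on the necks.

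Passing from $X_1\sqcup X_2$ to $(X_1\# X_2)\sqcup S^4$ interchanges the positive parts of two components, so it is a transposition. The gluing map $V_t$ acts on $\bigoplus_i F$ over the neck by $\psi(tw)\otimes\mathrm{id}_F$, where $\psi$ is a path from $\mathrm{Id}$ to the permutation matrix. Two constraints force this form:
\begin{itemize}
\item $V_t$ must be scalar in the fibre, so that it commutes with the cross-sectional operator on $S^3$. This is what makes $V_t^{-1}PV_t=P+d\log V_t$ a zeroth-order perturbation.
\item $V_t$ must be orthogonal, so that $|V_t\phi|=|\phi|$ and $|V_ta|=|a|$ in the a priori estimates.
\end{itemize}
For $n=2$ there is no path, even in $GL_2(\R)$, from $\mathrm{Id}$ to $\begin{pmatrix}0&1\\1&0\end{pmatrix}$, because the determinant would have to change sign. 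A fibrewise swap in $GL(F\oplus F)$ does not help either: for $F=\Lambda^+$, of rank $3$, it has determinant $-1$.

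This is exactly why the paper requires $\tau$ to be even. It proves the theorem with three components $X_1\# S^4$, $S^4\# X_2$, $S^4\# S^4$ and the $3$-cycle, so that $X^\tau=(X_1\# X_2)\sqcup S^4\sqcup S^4$ and the two spheres contribute $[id]$. The paper's proof of this statement is precisely that trick plus the isomorphism theorem. You need the extra $S^4$ component. Alternatively, you could end the path at a signed permutation and then check that the resulting sign on one transplanted piece does not alter the identification of $\ind(D)$ and $H^+$.

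\textbf{Linearising the neck must come first.} Your outline leaves implicit that the rotation can only be performed after the map has been made linear on the neck. The terms $q(\phi)$ and $\rho(a)\phi$ do not commute with mixing spinors and forms of different components. Moreover, for intermediate $t$ the conjugate $V_t^{-1}\mu V_t$ is not even defined off the neck. The paper therefore proceeds in three steps:
\begin{enumerate}
\item cut off $q$ on $N(L)$;
\item replace $D_{A+a}$ by $D_{A+\rho_{2,t}a}$;
\item only then rotate the linear map $P$.
\end{enumerate}
Properness along all three homotopies comes from self-improving bounds: $\|\phi\|_{C^0}^2\le 2S$ and $\|a\|_{C^0}\le 2U$ imply $\le S$ and $\le U$. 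These rest on positive curvature on the neck, an $L$-independent elliptic estimate from weighted spaces on the cylindrical-end pieces, and exponential decay of the $S^3$-component of the harmonic form $a$, which controls $(d\rho_{2,t}\wedge a)^+$.

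\textbf{The $\Z_2$-action.} The $\Z_2$ of constant gauge transformations acts by $-1$ on spinors and trivially on \emph{all} forms, including those on $X_1$. If it acted on the $X_1$-forms differently from the $X_2$-forms, as you suggest, a rotation mixing the components would not be $\Z_2$-equivariant.
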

	
	These two theorems provide new insights into the differential structures of smooth $4$-manifolds.
	
	The structure of the thesis is organized as follows: In section \ref{Preliminaries}, we review necessary background materials on $Pin^{-}(2)$-monopole theory, following \cite{nakamura2013monopole,nakamura2015pin}. In section \ref{The monopole map and the stable cohomotopy invariant}, we develop the stable cohomotopy refinement of $Pin^{-}(2)$-monopole theory, and give the proof of theorem \ref{main theorem1}. As examples, we give computations for the cohomotopy invariants of $N$ and several other elliptic surfaces. In section \ref{Connected sum formula}, we adapt the method of \cite{bauer2004stable} to proving the connected sum formula, and propose the computation of the cohomotopy invariant of $N\#K$ as an application. In section \ref{Problems and Future Directions}, we propose some further tasks which grow out of our work. 
	
    \begin{acknowledgements}
    	The author would like to express deep gratitude to his advisor Professor Tsuyoshi Kato for consistent encouragement and insightful guidance during the preparation of this thesis. He also wishes to thank professor Nobuhiro Nakamura for introducing this field, for invaluable discussions, and for providing numerous helpful references, without whom the completion of this thesis would not be so smooth.
    \end{acknowledgements}
	
	\section{Preliminaries}\label{Preliminaries}
	
	A large portion of this section is due to \cite{nakamura2013monopole,nakamura2015pin}.
	
	\subsection{$Spin^{c-}$ structures}
	\begin{definition} [$Spin^{c-}$ {\bfseries groups}] Let $Pin^{-}(2)$ be the subgroup of $Sp(1)$ generated by $U(1)$ and $j$, i.e., $Pin^{-}(2)=U(1)\cup jU(1)$. There exists a $2$-to-$1$ homomorphism $\phi_0:Pin^{-}(2)\to O(2)$ sending $z$ to $z^2$ and $j$ to 
		$$\begin{pmatrix}
			1 & 0 \\
			0 & -1
		\end{pmatrix}.$$
	We define the group $Spin^{c-}(n)$ as $Spin(n)\times_{\{\pm 1\}} Pin^{-}(2)$. There is an exact sequence
	$$1\to \{\pm 1\}\to Spin^{c-}(n)\to SO(n)\times O(2)\to 1.$$
	\end{definition}
	
	Let $X$ be an closed oriented connected Riemannian $n$ manifold with double cover $\tilde{X}\to X$. Denote the principal frame bundle on $X$ by $Fr(X)$. $Spin^c(n)$ is the identity component of $Spin^{c-}(n)$ and $Spin^{c-}(n)/{Spin^{c}(n)}=\{\pm 1\}$. Also we have $Spin^{c-}(n)/{Pin^{-}(2)}=SO(n)$ and $Spin^{c-}(n)/{Spin(n)}=O(2)$.
	
	\begin{definition}
	Let $\tilde{X}\to X$ be a double cover of a closed connected and oriented smooth $4$ manifold X. Then we can define a {\bfseries $Spin^{c-}$ structure} $(P,\sigma,\tau)$ where
		\begin{enumerate}
			\item $P\to X$ is a principal $Spin^{c-}(4)$ bundle over $X$,
			\item $\sigma:P/{Spin^c(4)}\to \tilde{X}$ is an isomorphism,
			\item $\tau:P/{Pin^{-}(2)}\to Fr(X)$ is an isomorphism between $SO(4)$ bundles.
		\end{enumerate}
	\end{definition}
	
	\begin{definition}
		The {\bfseries characteristic} $O(2)$-{\bfseries bundle} is the $O(2)$ bundle $E=P/{Spin(4)}$ associated to the $Spin^{c-}$ structure. Let $l$ be the $\Z$-bundle $\tilde{X}\times_{\{\pm 1\}}\Z$ over $X$, then $l$ is related to $E$ by $\det E=l\otimes\R=\lambda$.
	\end{definition}
	
	\begin{proposition}
		
		\begin{itemize}
			\item For an $O(2)$-bundle $E$ with $\det E=l\otimes\R$ as above, there exists a $Spin^{c-}$ structure on the double cover $\tilde{X}\to X$ with characteristic bundle $E$ if and only if $w_2(x)=w_2(E)+w_1(l\otimes\R)^2$.
			\item Given a $Spin^{c-}$ structure on $\tilde{X}\to X$, then the set of isomorphism class of $Spin^{c-}$ structures on $\tilde{X}\to X$ is in $1$-$1$ correspondence with the set $H^2(X;l)$.
		\end{itemize}
	\end{proposition}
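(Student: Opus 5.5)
The plan is to use obstruction theory and the classification of principal bundles with structure group $Spin^{c-}(4)$. The group sits in the extension $1\to\{\pm1\}\to Spin^{c-}(4)\to SO(4)\times O(2)\to 1$. A $Spin^{c-}$ structure with characteristic bundle $E$ is then the same thing as a lift of the $SO(4)\times O(2)$-bundle $Fr(X)\times_X E$ through this central $\Z_2$ extension. One also needs compatibility with $\tilde X$. This is automatic: the $\{\pm1\}$-bundle $P/Spin^c(4)$ is the orientation double cover of $E$, since $Spin^{c-}(4)/Spin^c(4)=O(2)/SO(2)$. So $\sigma$ exists exactly when $\det E\cong l\otimes\R$, which is our hypothesis.

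For the first bullet, the obstruction to lifting through a central $\Z_2$ extension is the pullback of the extension class $\omega\in H^2(B(SO(4)\times O(2));\Z_2)$ under the classifying map. I would identify $\omega$ by restricting the extension to each factor.
\begin{itemize}
\item Over $SO(4)$ it is $Spin(4)$, which contributes $w_2$ of the frame bundle.
\item Over $O(2)$ it is $Pin^{-}(2)\to O(2)$, $z\mapsto z^2$. This is the $Pin^{-}$-type double cover, whose class is $w_2+w_1^2$. One checks this by noting that $j$ squares to $-1$, so reflections lift to elements of order $4$; by convention this is the $Pin^-$ case, whose obstruction is $w_2+w_1^2$.
\item The mixed terms vanish because $H^1(BSO(4);\Z_2)=0$.
\end{itemize}
Hence $\omega=w_2(Fr)+w_2(E)+w_1(E)^2$. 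With $w_1(E)=w_1(\lambda)$, a lift exists iff $w_2(X)=w_2(E)+w_1(l\otimes\R)^2$.

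The step I expect to be the main obstacle is the sign convention, namely the $Pin^{+}$ versus $Pin^{-}$ identification. I would verify it directly on $B\Z_2\subset BO(2)$, restricting to the reflection subgroup generated by $\mathrm{diag}(1,-1)$. Its preimage in $Pin^{-}(2)$ is $\{\pm1,\pm j\}\cong\Z_4$, a nontrivial extension. The class of a nontrivial extension of $\Z_2$ by $\Z_2$ is $x^2$. This agrees with $w_2+w_1^2=0+x^2$ for the representation $1\oplus\epsilon$, whereas $w_2$ alone would give $0$.

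For the second bullet, I would fix one structure $P_0$ and compare it with an arbitrary $P$ having the same $\tilde X$ and $Fr(X)$, but possibly a different $E$. The relevant fiber is $Spin^{c-}(4)\to SO(4)\times\{\pm1\}$, with kernel $U(1)$ acted on by the component group through $j z j^{-1}=\bar z$. Structures over fixed $(\tilde X,Fr(X))$ therefore form a torsor over the set of $U(1)$-bundles twisted by $\tilde X$. Concretely, $P$ and $P_0$ differ by a principal bundle for the local system $\underline{U(1)}_l$, the sheaf of $U(1)$-valued functions with monodromy acting by conjugation. The group of such bundles is $H^1(X;\underline{U(1)}_l)$. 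The exponential sequence $0\to\underline{\Z}_l\to\underline{\R}_l\to\underline{U(1)}_l\to0$ has a fine middle term, so this group is isomorphic to $H^2(X;l)$. The action is $P\mapsto P\times_{U(1)}L$, and it is free and transitive by the standard torsor argument for a central-type kernel. The kernel is not central here, but it is normal, and the twisting is exactly what turns untwisted coefficients into $l$-coefficients.
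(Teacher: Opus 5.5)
The paper gives no proof of this proposition. It is quoted from Nakamura's work, so there is no in-paper argument to compare yours against.

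Your argument is correct and is the standard one. It has three parts.
\begin{enumerate}
\item You lift $Fr(X)\times_X E$ through the central $\{\pm1\}$-extension. The map $\sigma$ comes for free, because $\bar\phi^{-1}(SO(2))=Spin^c(4)$ makes $P/Spin^c(4)$ the orientation cover of $E$.
\item You split the extension class by K\"unneth, using $H^1(BSO(4);\Z_2)=0$.
\item For the classification, you use the torsor structure on lifts through $Spin^{c-}(4)\to SO(4)\times\{\pm1\}$, whose kernel $U(1)$ is abelian and normal. The coefficient sheaf is exactly the gauge-group bundle $\tilde X\times_{\{\pm1\}}U(1)$, and the twisted exponential sequence then gives $H^2(X;l)$.
\end{enumerate}

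Two points are worth tightening.

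First, your check on the reflection subgroup $\{1,\mathrm{diag}(1,-1)\}$ only detects the coefficient of $w_1^2$ in $H^2(BO(2);\Z_2)=\Z_2\langle w_1^2,w_2\rangle$. The coefficient of $w_2$ comes from restricting to $SO(2)$. There $z\mapsto z^2$ is the connected double cover of $U(1)$, whose class is $c_1 \bmod 2=w_2$. Using both restrictions you obtain $w_2+w_1^2$ directly, without relying on the $Pin^{\pm}$ naming convention.

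Second, $U(1)$ is not central, so ``$P\times_{U(1)}L$'' needs to be read more carefully. It should mean the fiberwise product $L\times_{\mathcal U}P$ over the bundle of abelian groups $\mathcal U=P\times_{\mathrm{Ad}}U(1)\cong\tilde X\times_{\{\pm1\}}U(1)$, which acts on $P$ by gauge transformations. Equivalently, you multiply the \v{C}ech lifts $\tilde g_{\alpha\beta}$ by a cocycle $c_{\alpha\beta}$ satisfying the twisted condition $c_{\alpha\gamma}=c_{\alpha\beta}\,\mathrm{Ad}(g_{\alpha\beta})(c_{\beta\gamma})$. Read this way, freeness and transitivity of the action follow from a one-line \v{C}ech computation.
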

	
	\begin{definition}
		If the cover $\tilde{X}\to X$ is nontrivial, we will say the $Spin^{c-}$ structure is {\bfseries twisted}. Otherwise when $\tilde{X}\to X$ is the trivial disconnected double cover, we will say the $Spin^{c-}$ structure is {\bfseries untwisted}, and this structure has a $Spin^c(4)$ reduction which induces a $Spin^c$ structure on $X$.
	\end{definition}
	
	\subsection{Spinor bundles and Clifford multiplication}
	
	Let $\Qua_T$ be a $Spin^{c-}(4)$ module which is isomorphic to $\Qua$ as a vector space, and such that $[q_+,q_-,u]\in Spin^{c-}(4)=(Sp(1)\times Sp(1))\times_{\{\pm 1\}} Pin^{-}(2)$ acts on $v\in\Qua_T$ by $q_+vq_-^{-1}$. Then we can identify $P\times_{Spin^{c-}(4)}\Qua_T$ with the tangent bundle $TX$.
	
	Let $\overline{\phi}:Spin^{c-}(4)\to O(2)$ be the homomorphism obtained from $\phi_0:Pin^{-}(2)\to O(2)$, then we can recover the bundle $E$ as $P\times_{\overline{\phi}}O(2)$.
	
	\begin{definition}
		Let $\Qua_{\pm}$ be $Spin^{c-}(4)$ modules which are copies of $\Qua$ as vector spaces, such that $[q_+,q_-,u]\in Spin^{c-}(4)$ acts on $\phi\in\Qua_{\pm}$ by $q_{\pm}\phi u^{-1}$. Then the {\bfseries positive} and {\bfseries negative spinor bundles} for the $Spin^{c-}$ structure are defined as $S^+=P\times_{Spin^{c-}(4)}\Qua_+$ and $S^-=P\times_{Spin^{c-}(4)}\Qua_-$ respectively, and we write $S=S^+\oplus S^-$.
	\end{definition}   
	
	\begin{definition}[Clifford multiplication $1$]
	The {\bfseries Clifford multiplication} $\rho_{\R}:\Omega^1(X)\times\Gamma(S^+)\to\Gamma(S^-)$ is defined via the $Spin^{c-}(4)$ equivariant map 
		\begin{align*}
			\Qua_T\times \Qua_+&\to \Qua_-\\
			(v,\phi)&\mapsto \bar{v}\phi.
		\end{align*}
	\end{definition}
	
	We will need a twisted complex version of the Clifford multiplication defined in the following context. Let $G_0\cong Spin^{c}(4)\subset Spin^{c-}(4)$ be the identity component, then $Spin^{c-}(4)/{G_0}\cong\{\pm 1\}.$ Let $\varepsilon:Spin^{c-}(4)\to Spin^{c-}(4)/{G_0}$ be the natural projection and let $Spin^{c-}(4)/{G_0}$ act on $\C$ by complex conjugation. Then $Spin^{c-}(4)$ acts on $\C$ through $\varepsilon$ and complex conjugation. Define the $Spin^{c-}(4)$ equivariant map $\rho_0$ as 
	\begin{align*}
		\rho_0:\Qua_T\otimes_{\R}\C\times\Qua_+&\to\Qua_-\\
		(v\otimes a,\phi)&\mapsto \bar{v}\phi\bar{a}.
	\end{align*}
	Define the bundle $K$ over $X$ by $K:=\tilde{X}\times_{\{\pm 1\}}\C$ where $\{\pm 1\}$ acts by complex conjugation. Note that $K=\underline{\R}\oplus\ii\lambda$. We can define the full Clifford multiplication via $\rho_0$ 
	$$\rho:\Omega^1(X;K)\times\Gamma(S^+)\to\Gamma(S^-).$$
	Restricting $\rho$ to $\underline{\R}$, we can recover $\rho_{\R}$.
	
	\begin{definition}[Clifford multiplication $2$]
		Restricting $\rho$ to $\ii\lambda$, we obtain another {\bfseries Clifford multiplication}
		$$\rho:\Omega^1(X;\ii\lambda)\times\Gamma(S^+)\to\Gamma(S^-).$$
	\end{definition}
    
    \subsection{Dirac operators}
    
    Suppose in the rest of this section that $\lambda$ is a nontrivial bundle and fix a Riemannian metric on $X$. Given an $O(2)$-connection $A\in\mathcal{A}(E)$ on $E$, the Levi-Civita connection and $A$ will induce a unique $Spin^{c-}(4)$-connection on $P$.
    \begin{definition}
   	    The {\bfseries Dirac operator} $\mathcal{D}_A$ associated to the $O(2)$-connection is defined via $\rho$ upon choosing a synchronous orthonormal frame $\{e_i\}$ as 
   	    \begin{align*}
   		\mathcal{D}_A:\Gamma(S)&\to\Gamma(S)\\
   		\phi&\mapsto\sum_{i=1}^{4}\rho(e_i)\nabla_{A,e_i}\phi.
   	    \end{align*}
   	    And there is an induced operator $\mathcal{D}_A^+:\Gamma(S^+)\to\Gamma(S^-)$ defined by the same formula.
    \end{definition} 
	The definition of this Dirac operator $\mathcal{D}_A$ is independent of the choice of the frame and share similar properties with the ordinary ones. If $A'\in\mathcal{A}(E)$ is another $O(2)$ connection on $E$, then the difference $a=A'-A$ is in $\Omega^1(X;\ii\lambda)$ and $\mathcal{D}_{A'}$ and $\mathcal{D}_A$ are related via $\rho$ by
	$$\mathcal{D}_{A+a}\phi=\mathcal{D}_A\phi+\rho(a)\phi.$$
	
	There are {\bfseries twisted} hermitian inner products defined on spinor bundles
	\begin{equation*}
		\langle\cdot,\cdot\rangle_{K,x}:S^{\pm}_x\times S^{\pm}_x\to K_x. \rlap{\quad (*)}
	\end{equation*}
	The real part of $(*): \langle\cdot,\cdot\rangle_{\R,x}=\text{Re}\langle\cdot,\cdot\rangle_{K,x}$ defines a real inner product on $S^{\pm}$. Then in fact the Dirac operator is formally self adjoint with respect to the $L^2$ inner product induced from $\langle\cdot,\cdot\rangle_{\R}$
	
	\begin{proposition}
		$\mathcal{D}_A$ is formally self adjoint in the sense that
		$$(\phi,\mathcal{D}_A\psi)_{L^2}=(\mathcal{D}_A\phi,\psi)_{L^2},$$
		where $(\phi_1,\phi_2)_{L^2}=\int_X \langle\phi_1,\phi_2\rangle_{\R}dvol.$
	\end{proposition}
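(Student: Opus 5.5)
The plan is to reduce the statement to the pointwise symmetry of Clifford multiplication with respect to $\langle\cdot,\cdot\rangle_{\R}$, and to a divergence-theorem argument, just as for ordinary $Spin^c$ Dirac operators. I will work locally over a small open set $U\subset X$ over which the double cover $\tilde{X}\to X$ is trivial. Over $U$ the $Spin^{c-}$ structure reduces to a $Spin^c$ structure, $K|_U\cong\underline{\C}$ and $\ii\lambda|_U\cong \ii\underline{\R}$. The first step is to check pointwise that $\langle\cdot,\cdot\rangle_{\R}=\mathrm{Re}\langle\cdot,\cdot\rangle_K$ is well defined and $Spin^{c-}(4)$-invariant on $\Qua_\pm$. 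On $\Qua$ it is the standard Euclidean product $\mathrm{Re}(\phi\bar\psi)$, which is invariant under $\phi\mapsto q\phi u^{-1}$ for unit quaternions $q,u$. Invariance under $u=j$ is automatic, since $j\in Sp(1)$. So no sign issues arise from the twisting, and the real inner product is globally defined even though the $K$-valued one is twisted.

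The second step is to verify two pointwise facts with respect to $\langle\cdot,\cdot\rangle_{\R}$:
\begin{enumerate}
\item For a real tangent vector $v$, the Clifford map $\rho(v)$ on $S=S^+\oplus S^-$ is skew-adjoint, i.e. $\langle\rho(v)\phi,\psi\rangle_{\R}=-\langle\phi,\rho(v)\psi\rangle_{\R}$. In quaternionic terms, $\phi\mapsto\bar v\phi$ from $\Qua_+$ to $\Qua_-$ and $\psi\mapsto v\psi$ back (with the sign convention making $\rho(v)^2=-|v|^2$) satisfy $\mathrm{Re}(\bar v\phi\bar\psi)=\mathrm{Re}(\phi\,\overline{v\psi})$. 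This is exactly the needed adjointness once the sign convention for $\rho$ on $S^-$ is fixed.
\item The spin connection $\nabla_A$, built from the Levi-Civita connection and the $O(2)$-connection $A$, is metric for $\langle\cdot,\cdot\rangle_{\R}$, i.e. $d\langle\phi,\psi\rangle_{\R}=\langle\nabla_A\phi,\psi\rangle_{\R}+\langle\phi,\nabla_A\psi\rangle_{\R}$. This holds because the connection takes values in the Lie algebra of $Spin^{c-}(4)$, which acts by skew-adjoint endomorphisms since the group acts isometrically.
\end{enumerate}
We also need that $\rho(e_i)$ is parallel, which follows because Clifford multiplication is induced from a $Spin^{c-}(4)$-equivariant map.

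The third step is the standard computation in a synchronous frame at a point $x$, where $\nabla e_i(x)=0$. Define a real $1$-form $\omega$ by $\omega(Y)=\langle\rho(Y)\phi,\psi\rangle_{\R}$. Then
$$\langle\mathcal{D}_A\phi,\psi\rangle_{\R}-\langle\phi,\mathcal{D}_A\psi\rangle_{\R}=\sum_i\Big(\langle\rho(e_i)\nabla_{e_i}\phi,\psi\rangle_{\R}+\langle\rho(e_i)\phi,\nabla_{e_i}\psi\rangle_{\R}\Big)=\sum_i e_i\big(\omega(e_i)\big)=-d^*\omega.$$
The first equality uses fact 1 applied to the second term. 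The second uses fact 2 together with the parallelism of $\rho(e_i)$ in the synchronous frame. Both sides are globally defined, since $\omega$ uses only the real inner product and real vectors. Integrating over closed $X$ and applying the divergence theorem gives $\int_X d^*\omega\,dvol=0$, which yields the claim, including its restriction to $\mathcal{D}_A^+$ paired with its formal adjoint.

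The main obstacle is fact 1: confirming that the sign and conjugation conventions for $\rho$ on $S^-\to S^+$, and the appearance of $\bar a$ in $\rho_0$, really give skew-adjointness with respect to $\mathrm{Re}\langle\cdot,\cdot\rangle_K$. I would check this directly on $\Qua$ using $\mathrm{Re}(xy)=\mathrm{Re}(yx)$. Since only real $1$-forms enter $\mathcal{D}_A$, the twisted $\ii\lambda$ part does not intervene.
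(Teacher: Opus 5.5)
Your argument is correct. The form $\mathrm{Re}(\phi\bar\psi)$ is invariant under $\phi\mapsto q\phi u^{-1}$ for all unit quaternions, including $u=j$. Using $\mathrm{Re}(xy)=\mathrm{Re}(yx)$, the identity $\mathrm{Re}(\bar v\phi\bar\psi)=\mathrm{Re}(\phi\,\overline{v\psi})$ gives skew-adjointness of $\rho(v)$ on $S$, provided the $S^-\to S^+$ component is $\psi\mapsto -v\psi$. The synchronous-frame computation then yields $\langle\mathcal{D}_A\phi,\psi\rangle_{\R}-\langle\phi,\mathcal{D}_A\psi\rangle_{\R}=-d^*\omega$, and integrating over $X$ finishes it. The paper itself gives no proof of this proposition; it is quoted from Nakamura along with the rest of Section 2, so there is no in-paper argument to compare yours against.

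The alternative in the spirit of Nakamura's treatment skips the quaternionic checks. Over any open set trivializing $\tilde X\to X$, the structure is an honest $Spin^c$ structure and $\mathcal{D}_A$ is the ordinary $Spin^c$ Dirac operator; the same holds globally on $\tilde X$, where the pulled-back structure carries an antilinear involution. Hermitian formal self-adjointness there passes to the real part. On $\tilde X$ one then uses that the $L^2$ pairing of lifted sections is twice the pairing on $X$. That route is shorter but defers everything to the $Spin^c$ case.

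Your version is self-contained and isolates the two points specific to the twisted setting. First, only $\mathrm{Re}\langle\cdot,\cdot\rangle_K$ is untwisted, so $\omega$ is a genuine real $1$-form on $X$ whose divergence integrates to zero. The imaginary part of a $K$-valued version would be $\lambda$-twisted and could not be integrated over $X$. Second, the paper never states the sign convention for $\rho$ on $S^-$, and it must be the one with $\rho(v)^2=-|v|^2$. With the symmetric convention, the same computation would show $\mathcal{D}_A$ is formally skew-adjoint rather than self-adjoint. So you were right to flag that step as the real content.
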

	We will write $D_A=\mathcal{D}_A^+$ when no confusion arises in subsequent sections.

	\subsection{$Pin^{-}(2)$ monopole equations}
	
	The curvature $F_A$ of $A$ belongs to $\Omega^2(X;\ii\lambda)$. $\Omega^+(X;\ii\lambda)$ is associated to $P$ as follows. Let $\varepsilon:Pin^-(2)\to Pin^-(2)/{U(1)}\cong \{\pm 1\}$ be the projection and let $[q_+,q_-,u]\in Spin^{c-}(4)$ act on $v\in \operatorname{Im}\Qua$ by $\varepsilon(u)q_+vq_+^{-1}$. Then $\Gamma(P\times_{Spin^{c-}(4)}\operatorname{Im}\Qua)$ is isomorphic to $\Omega^+(X;\ii\lambda).$ For $\phi\in\Qua_+$, $Spin^{c-}(4)$ acts on $\phi\ii\overline{\phi}\in \operatorname{Im}\Qua$ similarly. So we can define a quadratic map
	\begin{align*}
		q:\Gamma(S^+) &\to \Omega^+(X;\ii\lambda)\\
		\phi &\mapsto\phi\ii\overline{\phi}.
	\end{align*}
	
	\begin{definition}
		Let $\mathcal{A}(E)$ be the space of $O(2)$-connections on $E$. Then the $Pin^{-}(2)$ {\bfseries monopole equations} for the pair $(A,\phi)\in \mathcal{A}(E)\times \Gamma(S^+)$ are defined by
		\begin{equation}\label{monopole equations}
			\begin{cases}
				D_A\phi=0,\\
				F_A^+=q(\phi).
			\end{cases}
		\end{equation}
	\end{definition}
	
	Let $\mathcal{C}=\mathcal{A}(E)\times \Gamma(S^+)$ denote the configuration space and $\mathcal{C}^*=\mathcal{A}(E)\times (\Gamma(S^+)\setminus\{0\})$ denote the space of irreducible configurations. Fix $k\geq 4$, and take $L_k^2$ completions of $\mathcal{C}$. The $Pin^{-}(2)$ monopole equations are assumed to be equations for $L_k^2$ configurations.
	
	\begin{definition}
		The {\bfseries gauge group} $\G$ is identified with $\Gamma(\tilde{X}\times_{\{\pm 1\}}U(1))$, where $\pm 1$ act on $U(1)$  by complex conjugation.
	\end{definition}
	
	We take $L_{k+1}^2$ completion of $\G$. The gauge action of $g\in\G$ on $\mathcal{A}(E)\times \Gamma(S^+)$ is given by $g(A,\phi)=(A-2g^{-1}dg,g\phi)$. If $\phi\neq 0$, then $\G$-action on $(A,\phi)$ is free and we call $(A,\phi)$ an {\bfseries irreducible}. On the other hand, $(A,0)$ is a {\bfseries reducible} with the stablizer isomorphic to $\{\pm 1\}$, which is the subgroup of constant sections $\{\pm 1\}\subset\G.$

	\subsection{$Pin^{-}(2)$ monopole invariants}
	
	\begin{definition}
		The {\bfseries moduli space} $\M(X,c)=\M_{Pin^{-}(2)}(X,c)$ is defined as the space of solutions modulo gauge transformations. \textnormal{(The perturbed moduli space is usually denoted by the same symbol)} 
	\end{definition}
	
	\begin{proposition}
		Let $\mathcal{B}^*=\mathcal{C}^*/{\G}$, then $\mathcal{B}^*$ has the homotopy type of $\R P^{\infty}\times T^{b_1(X;l)},$ and $\G\simeq \Z_2\times \Z^{b_1(X:l)}.$
	\end{proposition}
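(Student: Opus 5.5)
We first compute the homotopy type of the gauge group $\G=\Gamma(\tilde{X}\times_{\{\pm 1\}}U(1))$, and then obtain $\mathcal{B}^*$ from a fibration. A section of $\tilde{X}\times_{\{\pm1\}}U(1)$ is the same thing as a map $\tilde g:\tilde{X}\to U(1)$ satisfying $\tilde g(\iota x)=\overline{\tilde g(x)}$. The strategy is to read off $\pi_0(\G)$ from twisted cohomology and to show that each component of $\G$ is contractible.

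\textbf{Components of $\G$.} Consider the exponential sequence of coefficient systems
\begin{equation*}
0\to l\to \ii\lambda\to \tilde{X}\times_{\{\pm1\}}U(1)\to 1,
\end{equation*}
where $\ii\lambda\to U(1)$ is $\ii t\mapsto e^{2\pi \ii t}$. This is equivariant for conjugation on $U(1)$ and sign change on $\ii\R$, and its kernel is exactly the local system $l$. Since $\Gamma(\ii\lambda)=\Omega^0(X;\ii\lambda)$ is a fine sheaf, the long exact sequence gives
\begin{equation*}
0\to H^0(X;l)\to \Omega^0(X;\ii\lambda)\to \G\to H^1(X;l)\to 0.
\end{equation*}
Because $\lambda$ is nontrivial, $H^0(X;l)=0$. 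Hence the identity component $\G_e$ is the image of the vector space $\Omega^0(X;\ii\lambda)$ (or its $L^2_{k+1}$ completion) under the exponential, and this image is contractible. Therefore $\pi_0(\G)\cong H^1(X;l)$.

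\textbf{Identifying $H^1(X;l)$.} Since $X$ is connected and the cover is nontrivial, the universal coefficient theorem for local systems gives $H^1(X;l)\cong \Z^{b_1(X;l)}\oplus T$ for a torsion group $T$. The statement should be read with $T=\Z_2$. A cleaner route to the $\Z_2$ is to use that the constant sections $\pm1$ lie in $\G$ (the conjugation-fixed points of $U(1)$). The section $-1$ is not in $\G_e$: if $-1=e^{2\pi \ii t}$ with $t$ a section of $\lambda$, then $t$ takes values in $\tfrac12+\Z$, so it is a nonvanishing section of the nontrivial line bundle $\lambda$, which is impossible. Consequently $\{\pm1\}\to\pi_0(\G)$ is injective. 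I would split off this $\Z_2$ and identify the remaining part with the lattice $\Z^{b_1(X;l)}$ via harmonic $l$-valued $1$-forms, $g\mapsto [g^{-1}dg]/(2\pi\ii)\in H^1(X;l\otimes\R)$. This last identification is where I expect the main obstacle: any extra torsion of $H^1(X;l)$ has to be handled (the statement tacitly treats it as $\Z_2\times\Z^{b_1}$ up to homotopy), and I would just follow Nakamura in \cite{nakamura2015pin}.

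\textbf{Homotopy type of $\mathcal{B}^*$.} The action of $\G$ on $\mathcal{C}^*$ is free, and $\mathcal{C}^*$ is an affine space minus a linear subspace of infinite codimension, hence contractible. Therefore $\mathcal{B}^*\simeq B\G$. Since $\G\simeq \Z_2\times\Z^{b_1(X;l)}$ as a topological group, up to homotopy, we get
\begin{equation*}
B\G\simeq B\Z_2\times B\Z^{b_1}=\R P^\infty\times T^{b_1(X;l)}.
\end{equation*}
Concretely, one can first quotient by the based gauge group, leaving the $\Z_2$ of constant sections acting on the spinor sphere, which gives $\R P^\infty$. The remaining quotient of the affine space of connections by the harmonic lattice is the torus $\mathrm{Pic}^c(X)=H^1(X;\ii\lambda)/H^1(X;l)$.
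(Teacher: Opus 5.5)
The paper gives no proof of this proposition. It quotes Nakamura, and a few lines later asserts without argument that $\G/\{\pm1\}\simeq\Z^{b_1(X;l)}$, so your outline has to be judged on its own.

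Several of your steps are fine:
\begin{itemize}
\item The exponential sequence gives $\pi_0(\G)\cong H^1(X;l)$, with $\G_e\cong\Omega^0(X;\ii\lambda)$ contractible because $H^0(X;l)=0$.
\item The map $\{\pm1\}\to\pi_0(\G)$ is injective.
\item $\mathcal{B}^*\simeq B\G\simeq B\pi_0(\G)$, granting the slice theorem so that $\mathcal{C}^*\to\mathcal{B}^*$ is a principal $\G$-bundle.
\end{itemize}
After these steps, however, the claim $\G\simeq\Z_2\times\Z^{b_1(X;l)}$ is \emph{equivalent} to $\operatorname{Tors}H^1(X;l)\cong\Z_2$. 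That is exactly what you leave open (``should be read with $T=\Z_2$'', ``follow Nakamura''), and you even suggest extra torsion might occur. As written, the proposal does not prove the statement.

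The gap is easy to close, and your own map $g\mapsto[g^{-1}dg]\in H^1(X;\ii\lambda)$ does it once you compute its kernel. Suppose $g^{-1}dg=d(\ii\tau)$ with $\tau\in\Omega^0(X;\lambda)$. Then $ge^{-\ii\tau}$ is a parallel section of $\tilde X\times_{\{\pm1\}}U(1)$. Since $\tilde X$ is connected, it is a conjugation-invariant constant, i.e.\ $\pm1$, so $g\in\{\pm1\}\cdot\G_e$. Hence $\pi_0(\G)/\{\pm1\}$ embeds in a real vector space and is torsion free. It is finitely generated (a quotient of $H^1(X;l)$), of rank $\dim H^1(X;\lambda)=b_1(X;l)$, so it is $\Z^{b_1(X;l)}$, and the extension by $\{\pm1\}$ splits.

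Alternatively, use the sequence of local systems $0\to l\to\pi_*\underline{\Z}\to\underline{\Z}\to 0$. It yields $0\to H^0(\tilde X;\Z)\xrightarrow{\ 2\ }H^0(X;\Z)\to H^1(X;l)\to H^1(\tilde X;\Z)$. Since $H^1(\tilde X;\Z)$ is torsion free, the torsion of $H^1(X;l)$ is exactly $\Z_2$. The same follows from the universal coefficient theorem, using $\operatorname{Ext}(H_0(X;l),\Z)$ with $H_0(X;l)=\Z/2$.

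Separately, your closing ``concretely'' remark is wrong as stated. Evaluation at a point maps $\G$ onto the full fiber $U(1)$, so quotienting by the based gauge group leaves a residual $U(1)$, not $\Z_2$. The right complement to the stabilizer $\{\pm1\}$ is the paper's $\mathcal{K}_\gamma=\ker\theta_\gamma$. It gives $\G=\{\pm1\}\times\mathcal{K}_\gamma$, with $\mathcal{K}_\gamma$ acting freely on $\mathcal{A}(E)$, and leaves $\{\pm1\}$ acting freely on the spinors. Your $B\G$ argument does not depend on this remark.
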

	
	Since the $\G$ action is not free on $\mathcal{A}(E)$, we will give a subgroup of $\G$ which acts on $\mathcal{A}(E)$ freely. Take a loop $\gamma:S^1\to X$ such that the restrction $\lambda|_{\gamma}=\gamma^*\lambda$ is a nontrivial $\R$ bundle over $\gamma$. Let $\tilde{\gamma}\to\gamma$ be the connected double cover of $\gamma$ and define $\G_{\gamma}$ by $\G_{\gamma}=\Gamma(\tilde{\gamma}\times_{\{\pm 1\}}U(1))$. Then $\G_{\gamma}$ satisfies:
	\begin{itemize}
		\item We have an epimorphism $\G\to\G_{\gamma}$ by restricting $\G$ to $\gamma$.
		\item $\pi_0\G_{\gamma}\cong \{\pm 1\}$.
		\item By restriction and projection, we have an epimorphism 
		$$\theta_{\gamma}:\G\to \pi_0\G_{\gamma}\cong \{\pm 1\}.$$
	\end{itemize}
	
	Define $\mathcal{K}_{\gamma}=\ker\theta_{\gamma}.$ Consider the exact sequence $$1\to \{\pm 1\}\to \G\to \G/{\{\pm 1\}}\to 1.$$
	Then $\G/{\{\pm 1\}}$ is homotopy equivalent to $\Z^{b_1(X;l)}$, and $\theta_{\gamma}$ gives a splitting of this sequence. We will use the group $\mathcal{K}_{\gamma}$ in section \ref{The monopole map and the stable cohomotopy invariant} in constructing the stable cohomotopy invariant.
	
	\begin{theorem}
		Suppose $b^+(X;l)\geq 1$, then by a generic choice of perturbation $\eta\in\Omega^+(X;\ii\lambda)$, the moduli space $\M(X,c)$ has no reducible and is a compact manifold with dimension $d(c)$ given by
		$$d(c)=\frac{\tilde{c_1}(E)^2-\sigma(X)}{4}-(b_0(X;l)-b_1(X;l)+b^+(X;l)),$$
		where $ind(D_A)=\frac{\tilde{c_1}(E)^2-\sigma(X)}{4}$. 
	\end{theorem}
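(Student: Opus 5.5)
The statement splits into three claims: absence of reducibles, smoothness and compactness, and the dimension count. I will follow the standard Seiberg--Witten argument, run $\Z_2$-equivariantly with twisted coefficients. Throughout I work with the perturbed equations $D_A\phi=0$, $F_A^+ = q(\phi)+\eta$ for $\eta\in\Omega^+(X;\ii\lambda)$.

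\textbf{Reducibles.} A reducible solution $(A,0)$ requires $F_A^+=\eta$. The curvature class satisfies $[F_A/2\pi\ii]=\tilde{c_1}(E)$ in $H^2(X;\lambda)$, and $F_A$ varies within this class by exact forms $da$ with $a\in\Omega^1(X;\ii\lambda)$. By twisted Hodge theory, $F_A^+$ therefore ranges over an affine subspace of $\Omega^+(X;\ii\lambda)$. This subspace has codimension $b^+(X;l)$: it is the preimage of the single point $2\pi\ii\,\tilde{c_1}(E)^+$ under the projection onto twisted self-dual harmonic forms, which form a space of dimension $b^+(X;l)$. Since $b^+(X;l)\geq 1$, this subspace has positive codimension, so its complement (the set of $\eta$ admitting no reducible solution) is open and dense.

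\textbf{Compactness.} I establish it by the Weitzenb\"ock argument. The twisted Dirac operator satisfies $D_A^*D_A=\nabla_A^*\nabla_A+\frac{s}{4}+\frac12\rho(F_A^+)$, and $\rho(q(\phi))\phi$ is a positive multiple of $|\phi|^2\phi$. These are pointwise identities that hold in local trivializations, where everything reduces to the $Spin^c$ case. The maximum principle then bounds $|\phi|^2\le\max(-s+c|\eta|,0)$, and from this bound $F_A^+$ is bounded. Modulo $\G$, and hence modulo the lattice $\G/\{\pm1\}\simeq\Z^{b_1(X;l)}$, the harmonic part of $A$ lies in the compact torus $H^1(X;\ii\lambda)/\text{lattice}$. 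After Coulomb gauge, elliptic bootstrapping gives $L^2_k$ bounds, so the moduli space is compact.

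\textbf{Transversality and dimension.} On $\mathcal{C}^*$ I consider the parametrized map $\mathcal{F}(A,\phi,\eta)=(D_A\phi,F_A^+-q(\phi)-\eta)$. Its differential is surjective, because the $\eta$-direction already covers $\Omega^+(X;\ii\lambda)$. For the spinor component, any $\psi\perp\operatorname{im}$ satisfies $\rho(a)\phi\perp\psi$ for all $a$ together with $D_A^*\psi=0$. Unique continuation, which applies on the irreducible locus where $\phi\neq0$ on an open set, then forces $\psi=0$. Sard--Smale gives that for generic $\eta$ the irreducible moduli space is smooth, and the $\G$-action on it is free.

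The dimension is the index of the deformation complex
\[
0\to\Omega^0(X;\ii\lambda)\to\Omega^1(X;\ii\lambda)\oplus\Gamma(S^+)\to\Omega^+(X;\ii\lambda)\oplus\Gamma(S^-)\to0 .
\]
Up to compact perturbation this complex splits as the twisted ASD complex plus the real Dirac operator. The twisted ASD complex has index $-(b_0(X;l)-b_1(X;l)+b^+(X;l))$, read off from the twisted Hodge theory. The real Dirac operator has real index $\ind(D_A)=\frac{\tilde c_1(E)^2-\sigma(X)}{4}$. Note that $S^\pm$ are quaternionic-rank-one real bundles, so this index must be computed in the $Pin^-(2)$ setting rather than taken as the complex index.

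\textbf{Main obstacle.} The hard step is computing this last index. My plan is to pull back to $\tilde X$, where the structure becomes an ordinary $Spin^c$ structure with determinant line $\tilde E$ (the complexification of the pullback of $E$) and an antilinear lift $I$ of the deck involution $\iota$. The index on $X$ is the real index of the $I$-invariant part, which equals half the complex index upstairs:
\[
\tfrac12\cdot\frac{c_1(\tilde E)^2-\sigma(\tilde X)}{4}\cdot 2\cdot\tfrac12 .
\]
Since $c_1^2$ and $\sigma$ double under the cover, this reduces to $\frac{\tilde{c_1}(E)^2-\sigma(X)}{4}$. Everything else in the argument is a routine twisted version of the standard theory.
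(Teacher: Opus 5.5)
The paper does not prove this statement. It is recalled in the preliminaries as a result of Nakamura \cite{nakamura2013monopole,nakamura2015pin}, so there is no internal argument to compare with. Your outline is sound and is the standard Seiberg--Witten argument that Nakamura carries over to the twisted setting:
reducibles are confined to an affine wall of codimension $b^+(X;l)$;
compactness follows from the Weitzenb$\ddot{o}$ck formula, checked locally where the $Spin^{c-}$ structure reduces to a $Spin^c$ one;
transversality comes from the parametrized map plus unique continuation;
the dimension is the index of the twisted deformation complex.

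The one thing to repair is in the step you call the main obstacle. The real index of $D_A$ is not half the complex index of the pulled-back operator $\tilde D$ on $\tilde X$; it \emph{equals} it.

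Here is why. The fibres of $\pi^*S^{\pm}$ over $x$ and $\iota(x)$ are both $S^{\pm}_{\pi(x)}$, and in compatible frames the tautological identification is $\psi\mapsto\psi j$. Right multiplication by $j$ anticommutes with the complex structure $J$ (right multiplication by $i$). So this is an antilinear involution $I$ that commutes with $\tilde D$, and sections over $X$ are exactly the $I$-invariant sections. Hence $\ker\tilde D=(\ker\tilde D)^I\oplus J(\ker\tilde D)^I$, and likewise for the cokernel. This gives $\ind_{\R}D_A=\ind_{\C}\tilde D=\frac{c_1(\pi^*E)^2-\sigma(\tilde X)}{8}=\frac{\tilde c_1(E)^2-\sigma(X)}{4}$.

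Your displayed product simplifies to $\frac{c_1(\tilde E)^2-\sigma(\tilde X)}{8}$, so the number you end with is right. But your phrase ``half the complex index upstairs'', read literally, gives $\frac{\tilde c_1(E)^2-\sigma(X)}{8}$. For the Enriques surface of Example~\ref{classical example Enriques surface} that would be $1$ instead of $2$.

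Relatedly, the determinant line upstairs is $\pi^*E$ itself, not its complexification. Because $\pi^*\lambda$ is trivial, $\pi^*E$ is an oriented plane bundle, hence a complex line bundle. Its complexification is a rank-two complex bundle with $c_1=0$, which would make the formula wrong. With $c_1(\pi^*E)=\pi^*\tilde c_1(E)$, $\langle\pi^*\tilde c_1(E)^2,[\tilde X]\rangle=2\langle\tilde c_1(E)^2,[X]\rangle$ and $\sigma(\tilde X)=2\sigma(X)$, the computation closes.
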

	
	Therefore by this theorem, the moduli space $\M(X,c)$ represents a class $[\M(X,c)]$ in $H_{d(c)}(\mathcal{B}^*)$. In general, the moduli space is not orientable, but we can defined the $\Z_2$ valued $Pin^{-}(2)$ monopole invariants.
	
	\begin{definition}
		The $Pin^{-}(2)$ {\bfseries monopole invariants} of $(X,c)$ is defined as 
		$$SW^{Pin}(X,c)=\langle\xi^{d(c)},[\M(X,c)]\rangle,$$
		where $[\M(X,c)]\in H_{d(c)}(\mathcal{B}^*;\Z_2)$ represents the fundamental class of $\M(X,c)$, and $\xi\in H^{1}(\mathcal{B}^*;\Z_2)$ represents the image of the generator $\zeta\in H^1(\R P^{\infty};\Z_2)$ under the restrcition map $\pi^*:H^1(\R P^{\infty};\Z_2)\to H^1(\R P^{\infty}\times T^{b_1(X;l)};\Z_2)$ induced by the projection $\pi:\mathcal{B}^*\simeq \R P^{\infty}\times T^{b_1(X;l)}\to \R P^{\infty}$.
	\end{definition}
	
	If $b^+(X;l)\geq 2$, then $SW^{Pin}(X,c)$ is a diffeomorphism invariant. If $b^+(X;l)=1$, then $SW^{Pin}(X,c)$ depends on the chamber structure and there is a wall-crossing formula.
	
	\subsection{A gluing formula}
	
	We only demonstrate one version of the gluing theorems in \cite{nakamura2015pin}.
	
	\begin{theorem}\label{Pin(2) gluing formula}
		Let $X_1$ be a closed oriented connected $4$-manifold with a twisted $Spin^{c-}$ structure $c_1$ with $b_1(X_1;l_1)\geq 1$. Let $X_2$ be a closed oriented connected $4$-manifold with a \textnormal{(twisted or untwisted)} $Spin^{c-}$ structure $c_2$, and suppose one of the followings:
		\begin{itemize}
			\item $b^+(X_2)\geq 1$, and $c_2$ is an untwisted $Spin^{c-}$ structure on $X_2$.
			\item $c_2$ is a twisted $Spin^{c-}$ structure on $X_2$ with $b^+(X_2;l_2)\geq 1$.
		\end{itemize}
		Then $SW^{Pin}(X_1\#X_2,c_1\#c_2)=0$.
	\end{theorem}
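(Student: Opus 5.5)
Since Theorem \ref{Pin(2) gluing formula} is quoted from \cite{nakamura2015pin}, I would prove it along Nakamura's lines: stretch the neck of $X_1\#X_2$ and show that the glued moduli space is a product-like object on which $\xi^{d}$ must vanish. I would choose a metric with a long neck $[-T,T]\times S^3$ and a perturbation supported away from the neck. Because $H^1(S^3)=0$ and $S^3$ has positive scalar curvature, the only finite-energy solutions on the cylinder are the reducible ones, $(A,0)$ with $A$ flat, and they are unobstructed. The standard gluing argument then shows that, for $T$ large, $\M(X,c_1\#c_2)$ is diffeomorphic to a fibered product
$$\tilde\M(X_1,c_1)\times_{\Z_2}\tilde\M(X_2,c_2),$$
where $\tilde\M(X_i)$ is the framed moduli space (a double cover of the irreducible part). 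The factor $\Z_2=\{\pm1\}$ is the stabilizer of the reducible on $S^3$. If $c_2$ is untwisted, $\tilde\M(X_2)$ is instead the based Seiberg--Witten moduli space, with gluing parameter $U(1)$ reduced to $\Z_2$.

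Under the hypotheses, $b^+(X_1;l_1)$ and the relevant $b^+$ of $X_2$ are both at least $1$. A generic perturbation on each side therefore removes reducibles, so both factors consist of irreducibles. The dimensions then satisfy
$$d(c)=d(c_1)+d(c_2)+1$$
when $c_2$ is twisted, and the analogous count with the extra $U(1)$ orbit when $c_2$ is untwisted.

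Next comes the cohomology computation. The class $\xi$ on $\mathcal{B}^*(X)$ restricts to the glued space as the characteristic class of the $\Z_2$-bundle $\tilde\M_1\times\tilde\M_2\to\tilde\M_1\times_{\Z_2}\tilde\M_2$. That class equals $\xi_1+\xi_2$, pulled back from the two factors.

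Then
$$\langle \xi^{d},[\M]\rangle=\sum_{k}\binom{d}{k}\langle \xi_1^k\xi_2^{d-k},[\tilde\M_1\times_{\Z_2}\tilde\M_2]\rangle .$$
Each term factors through $\M_1\times\tilde\M_2$ or $\tilde\M_1\times\M_2$, where $\M_i=\tilde\M_i/\Z_2$ is the unframed moduli space. A degree count then forces one factor's power to exceed its dimension. The leftover circle/torus direction coming from $b_1(X_1;l_1)\ge1$ gives the vanishing: the class $\xi_1$ pulled back from $T^{b_1(X_1;l_1)}$ direction-free cohomology of $\mathcal{B}^*(X_1)\simeq\R P^\infty\times T^{b_1}$ cannot pair nontrivially with a cycle whose torus component is in degree shifted by one. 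Alternatively, the $\Z^{b_1}$-part of the gauge group acts on the framed gluing data without changing $\xi$, which exhibits $\M$ as a boundary in $\mathcal{B}^*$ after varying the holonomy along a loop $\gamma$ with $\lambda|_\gamma$ nontrivial, using the splitting by $\theta_\gamma$ and $\mathcal{K}_\gamma$.

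The main obstacle is this last step: identifying exactly why $b_1(X_1;l_1)\ge1$ forces the pairing to vanish mod $2$. My first attempt would be to show that the glued moduli space is a $\Z_2$-quotient on which the class $\xi$ is pulled back from a space of dimension less than $d$. Concretely, I would try to show that $\xi$ is pulled back along the map
$$\M\to \R P^\infty\times_{\Z_2}(\tilde\M_2\text{-side}),$$
using that on the $X_1$ side $\xi_1$ becomes trivial after composing with the map to the torus direction, and then conclude by dimension.
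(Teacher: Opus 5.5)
The paper gives no proof here (the statement is quoted from Nakamura), but your proposal has a genuine gap, and it starts with the gluing set-up.

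\textbf{The gluing parameter is $U(1)$, not $\Z_2$.} Over the neck the cross-section $S^3$ is simply connected, so $\lambda$ and the double cover are trivial there. The gauge group of $S^3$ is $\mathrm{Map}(S^3,U(1))$, and the stabilizer of the reducible limit is all of $U(1)$, not $\{\pm 1\}$. The group $\{\pm 1\}$ is the stabilizer of reducibles on the closed twisted manifold, which is a different thing. Likewise, even for a twisted structure, evaluation at a base point on the neck gives $\G/\G_0\cong U(1)$: use $g=e^{f}$ with $f$ a section of $\ii\lambda$ taking a prescribed value at the point. So the framed moduli space $\tilde\M_i$ is a free $U(1)$-space of dimension $d_i+1$, not a double cover of $\M_i$. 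Stretching the neck gives $\M(X)\cong\tilde\M_1\times_{U(1)}\tilde\M_2$, of dimension $d_1+d_2+1$. This is where the $+1$ in your (correct) dimension formula comes from. Your $\tilde\M_1\times_{\Z_2}\tilde\M_2$ has dimension $d_1+d_2$ and contradicts that formula. That mismatch is why you end up hunting for a missing direction in the Picard torus of $X_1$.

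\textbf{The vanishing step.} With the correct picture the vanishing is immediate and uses no input from $b_1$. Rotating one framed factor is a free circle action on $\M(X)$ with quotient $\M_1\times\M_2$, of dimension $d-1$. The double cover defining $\xi$ (quotient by $\mathcal K_\gamma$ versus $\G$, with $\gamma\subset X_1$) is trivial along the orbits:
\begin{itemize}
\item for untwisted $c_2$, the orbit is the closed loop $(A_1\#A_2,\phi_1\#e^{\ii\theta}\phi_2)$, already closed in $\mathcal C^*$;
\item for twisted $c_2$, the loop closes up after a gauge transformation supported on the $X_2$ side, which lies in $\mathcal K_\gamma$.
\end{itemize}
More directly, since $\mathcal K_\gamma(X)$ restricts into $\mathcal K_\gamma(X_1)$, $\xi$ is the pullback of $\xi_1$ under $\M(X)\to\M_1$. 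As $\dim\M_1=d_1<d$, this gives $\xi^{d}=0$. Your binomial expansion, the appeal to ``torus-direction-free'' cohomology, and the ``boundary in $\mathcal B^*$'' idea do not supply this step, as you acknowledge yourself.

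\textbf{The hypothesis you use is not the one stated.} You assume $b^+(X_1;l_1)\geq 1$ ``under the hypotheses'', but the statement as printed assumes $b_1(X_1;l_1)\geq 1$ and says nothing about $b^+(X_1;l_1)$. Without it, the $X_1$ side carries reducibles (a torus of them, with obstructed Dirac operators), and your gluing picture ignores them. The argument above needs precisely $b^+(X_1;l_1)\geq 1$ together with the $X_2$ hypothesis. The paper itself applies the theorem to $N\#K$, where $b_1(N;l)=0$ and $b^+(N;l)=2$, so the printed $b_1$ is most likely a misprint for $b^+$. You should state explicitly which hypothesis you are using.
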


	\section{The monopole map and the stable cohomotopy invariant}\label{The monopole map and the stable cohomotopy invariant}
	
	Let $S^+$ and $S^-$ denote the real rank-4 bundles associated to the given $Spin^{c-}$ structure on $\tilde{X}\to X$ and Let $E$ be the associated $O(2)$-bundle over $X$.
	
	For a $Spin^{c-}$ connection $A$, denote by $D_A:\Gamma(S^+)\to \Gamma(S^-)$ the associated Dirac operator. We can define the monopole map $\tilde{\mu}$ on X by
	\begin{align*}
		\tilde{\mu}:\A(E)\times(\Gamma(S^+)\oplus\Omega^1(X;\ii\lambda)) \to \\
		\A(E)\times (\Gamma(S^-)\oplus\Omega^+(X;\ii\lambda)\oplus\Omega^0(X;\ii\lambda)\oplus H^1(X;\ii\lambda))\\
		(A,\phi,a) \to (A,D_{A+a}{\phi},F_{A+a}^+-q(\phi),d^*a,pr(a))
	\end{align*}
	
	The full gauge group $\G=\Gamma(\tilde{X}\times_{\Z_2} U(1))$ acts on spinors via multiplication with $g\in\G$, on connections via subtracting with $2g^{-1}dg$ and trivially on forms.
	
	Let $A$ be a fixed connection, then the subspace $A+\ker d \subset \A(E)$ is invariant under the free action of the subgroup $\mathcal{K}_{\gamma}\subset \G$ with quotient space isomorphic to 
	$$\mathrm{Pic}^c(X)=H^1(X;\lambda)/{\operatorname{Free} {H^1(X;l)}}.$$
	Let $\A$ and $\mathcal{C}$ denote the quotients 
	\begin{align*}
		&\A=(A+\ker d)\times (\Gamma(S^+)\oplus\Omega^1(X;\ii\lambda))/{\mathcal{K}_{\gamma}}\\
		&\mathcal{C}=(A+\ker d)\times (\Gamma(S^-)\oplus\Omega^+(X;\ii\lambda)\oplus\Omega^0(X;\ii\lambda)\oplus H^1(X;\ii\lambda))/{\mathcal{K}_{\gamma}}
	\end{align*}
	by $K_{\gamma}$. Both $\A$ and $\mathcal{C}$ are bundles over $\mathrm{Pic}^c(X)$ and the quotient
	$$\mu=\tilde{\mu}/{\mathcal{K}_{\gamma}}:\A\to \mathcal{C}$$
	of the monopole map is a fiber-preserving, $\Z_2$ equivariant map over $\mathrm{Pic}^c(X)$.
	
	%A finite dimensional approximation of the monopole map restricted to a fiber would be of the following form 
	%$$f:\R^m\oplus\tilde{\R}^{n+k}\to \R^{m+b}\oplus\tilde{\R}^{n},$$
	%where $\Z_2$ acts trivially on $\R$ and acts via the sign representation on $\tilde{\R}$.
	
	For fixed $k>4$, consider fiberwise $L_k^2$ Sobolev completion $\A_k$ of $\A$ and fiberwise $L_{k-1}^2$ Sobolev completion $\mathcal{C}_{k-1}$ of $\mathcal{C}$, then $\mu$ extends to a continuous map $\mu=\mu_k:\A_k\to \mathcal{C}_{k-1}$ over $\mathrm{Pic}^c(X)$.
	
	We can decompose $\mu$ into a sum $\mu=l+c$, where $l=(D_A,d^++d^*+pr)$ is a linear Fredholm map and $c:(\phi,a)\mapsto (0,F_A^+,0,0)+(a\cdot\phi,-q(\phi),0,0)$ is a compact map as the sum of a constant map, and composition of multiplication $\A_k\times \A_k\to \mathcal{C}_k$ and compact Sobolev embedding $\mathcal{C}_k\hookrightarrow \mathcal{C}_{k-1}$.
	
	\begin{proposition}\label{boundedness}
		Preimages $\mu^{-1}(B)\subset \A_k$ of bounded disk bundles $B\subset\mathcal{C}_{k-1}$ are contained in bounded disk bundles.
		\begin{proof}
			It suffices to prove this for the restriction of $\mu$ to $\{A\}\times (\Gamma(S^+)\oplus \ker d^*)$, which is a subspace of the fiber over the origin of the Picard torus, maps to $\{A\}\times (\Gamma(S^-)\oplus\Omega^+(X;\ii\lambda)\oplus H^1(X;\ii\lambda))$. We can do this restriction because $d^*: (\ker d^*)^{\bot}\to \Omega^0(X;\ii\lambda)$ is a linear isomorphism.
			
			Using the elliptic operator $D=D_A+d^+$ and its adjoint, define $L_k^2$ norms on respective function spaces via 
			$$(\cdot,\cdot)_i=(\cdot,\cdot)_0+(D\cdot,D\cdot)_{i-1}, \quad (\cdot,\cdot)=(\cdot,\cdot)_0=\int_X \langle\cdot,\cdot\rangle dvol.$$
			Define the $L_k^p$ norms correspondingly.
			
			Let $\mu(A,\phi,a)=(A,\varphi,b,pr(a))\in\mathcal{C}_{k-1}$ bounded by $R\in\R$, i.e. $\abs{(A,\varphi,b,pr(a))}\leq R$. The Weitzenb$\ddot{o}$ck formula for $D_{A'}=D_{A+a}$ states 
			$$D_{A'}^*D_{A'}\phi=\nabla_{A'}^*\nabla_{A'}\phi+\frac{1}{4}\kappa\phi+\frac{1}{2}F_{A'}^+\cdot\phi.$$
			Take pointwise inner product with $\phi$ we obtain
			\begin{align*}
				\Delta|{\phi}|^2 &=2\langle\nabla_{A'}^*\nabla_{A'}\phi,\phi\rangle-2\langle\nabla_{A'}\phi,\nabla_{A'}\phi\rangle\\
				&\leq 2\langle\nabla_{A'}^*\nabla_{A'}\phi,\phi\rangle\\
				&=2\langle D_{A'}^*D_{A'}\phi-\frac{1}{4}\kappa\phi-\frac{1}{2}F_{A'}^+\cdot\phi,\phi\rangle\\
				&=\langle 2D_{A'}^*\varphi-\frac{1}{2}\kappa\phi-(b+q(\phi))\phi,\phi\rangle 
			\end{align*}
			In particular, 
			\begin{align*}
				\Delta|{\phi}|^2 +\frac{1}{2}\kappa\abs{\phi}^2+\frac{1}{2}\abs{\phi}^4&\leq\langle 2D_A^*\varphi,\phi\rangle+\langle 2a\cdot\varphi,\phi\rangle-\langle b\phi,\phi\rangle\\
				&\leq 2\left(\|D_A^*\varphi\|_{L^{\infty}}+\|a\|_{L^{\infty}}\|\varphi\|_{L^{\infty}}\right)\cdot\abs{\phi}+\|b\|_{L^{\infty}}\cdot\abs{\phi}^2\\
				&\leq C_1\left((1+\|a\|_{L^{\infty}})\|\varphi\|_{L_{k-1}^2}\cdot\abs{\phi}+\|b\|_{L^{2}_{k-1}}\cdot\abs{\phi}^2\right),
			\end{align*}
			where in the last inequality we have used the embedding $L_{k-1}^2\hookrightarrow C^0, L_{k-2}^2\hookrightarrow C^0$ and $\|D_A^*\varphi\|_{L^{\infty}}\leq C_a\|D_A^*\varphi\|_{L^{2}_{k-2}}\leq C_aC_b\|\varphi\|_{L^2_{k-1}}.$
			
			Let $p>4$, then we have the estimate $\|a\|_{L^{\infty}}\leq C_2\|a\|_{L^{p}_1}$. Using elliptic estimate for the operator $D_A+d^+$ on $(0,a)$, we obtain
			$$\|a\|_{L_1^p}\leq C_3\left(\|d^+a\|_{L^p}+\|pr(a)\|\right).$$
			Combined with the equality $d^+a=b-F_A^++q(\phi)$, we obtain
			\begin{align*}
				\|a\|_{L^{\infty}}&\leq C_2\|a\|_{L^{p}_1}\\
				&\leq C_2C_3\left(\|d^+a\|_{L^p}+\|pr(a)\|\right)\\
				&\leq C_2C_3\left(\|pr(a)\|+\|b\|_{L^p}+\|F_A^+\|_{L^p}+\|q(\phi)\|_{L^p}\right)\\
				&\leq C_4\left(\|pr(a)\|+\|b\|_{L_{k-1}^2}+\|F_A^+\|_{L^p}+\|\phi\|^2_{L^{\infty}}\right)
			\end{align*}
			where we have used the embeddings $L_{k-1}^2\hookrightarrow C^0\hookrightarrow L^p$ and $L^{\infty}\hookrightarrow L^p.$
			
			Evaluating $|\phi|^2$ at a maximum point $x_0$ so that $\Delta|\phi(x_0)|^2\geq 0$ and we obtain 
			\begin{align*}
				\|\phi\|^4_{L^{\infty}}&\leq \left((1+\|a\|_{L^{\infty}})R\|\phi\|_{L^{\infty}}+R\|\phi\|^2_{L^{\infty}}\right)+\|\kappa\|_{L^{\infty}}\|\phi\|^2_{L^{\infty}}\\
				&\leq CR\left((1+\|a\|_{L^{\infty}}\right)\|\phi\|_{L^{\infty}}+\|\phi\|^2_{L^{\infty}})+\|\kappa\|_{L^{\infty}}\|\phi\|^2_{L^{\infty}}\\
				&\leq CR\left(\left(1+C_4(R+\|F_A^+\|_{L^p}+\|\phi\|^2_{L^{\infty}})\right)\|\phi\|_{L^{\infty}}+\|\phi\|^2_{L^{\infty}}\right)+\|\kappa\|_{L^{\infty}}\|\phi\|^2_{L^{\infty}}\\
				&\leq C_5R\left((1+R)\|\phi\|_{L^{\infty}}+\|\phi\|^2_{L^{\infty}}+\|\phi\|^3_{L^{\infty}}\right)+\|\kappa\|_{L^{\infty}}\|\phi\|^2_{L^{\infty}},		
			\end{align*} 
			which is a polynomial estimate for $\|\phi\|_{L^{\infty}}$. Therefore we obtain a $L^{\infty}$ estimate for $(\phi,a)$ and hence a fortiori for the $L^p$ norm of $(\phi,a)$ using the inequality $\|f\|_{L^p}\leq \operatorname{Vol}(X)^{\frac{1}{p}}\|f\|_{L^{\infty}}.$
			
			Now comes the bootstrapping: For $i\leq k$, assuming inductively $L_{i-1}^{2p}$-bounds on $(\phi,a)$ with $p=2^{k-i}$. To obtain $L_i^p$-bounds, we compute:
			\begin{align*}
				\|(\phi,a)\|^p_{L_i^p}-\|(\phi,a)\|^p_{L^p} &=\|(D_A\phi,d^+a)\|^p_{L_{i-1}^p}\\
				&=\|(\varphi-a\phi,b-F_A^++q(\phi))\|^p_{L_{i-1}^p}\\
				&\leq \|(\varphi,b)\|^p_{L_{i-1}^p}+\|(a\phi,-F_A^++q(\phi))\|^p_{L_{i-1}^p}\\
				&\leq \|(\varphi,b,pr(a))\|^p_{L_{i-1}^p}+\|(a\phi,-F_A^++q(\phi))\|^p_{L_{i-1}^p}.
			\end{align*}  
			Using Sobolev multiplication $L_{i-1}^{2p}\times L_{i-1}^{2p}\to L_{i-1}^{p}$, then the second summand in the last expression is bounded by the assumed $L_{i-1}^{2p}$-bounds on $(\phi,a)$; Use the sequence of inclusion 
			$$L_{k-1}^2\subset L_{k-2}^2\subset L_{k-3}^{2^2}\subset\cdots\subset L_{k-(k-i+1)}^{2^{k-i}}=L_{i-1}^p,$$ 
			we see that the first summand is also bounded, hence the last expression is bounded and we have obtained the $L_i^p$-bounds on $(\phi,a)$. And therefore we conclude that $(\phi,a)$ is $L_k^2$ bounded.
		\end{proof}
	\end{proposition}

    %The following theorem labelled as 'Theorem $2.6$' in \cite{bauerfuruta2004stable}, together with propostition \ref{boundedness}, implies the first half of theorem \ref{main theorem1}, which is reformulated as corollary \ref{first half of theorem1}. 
    
    The following theorem due to Bauer--Furuta, together with Propostition \ref{boundedness}, implies the first half of Theorem \ref{main theorem1}, which is reformulated as Corollary \ref{first half of theorem1}.

	\begin{theorem}[{\cite[Theorem $2.6$]{bauerfuruta2004stable}}]\label{critical theorem}
		An equivariant Fredholm map $f=l+c:E'\to E$ between $G$-Hilbert space bundles over $Y$ with $E\cong Y\times H$, which extends continuously to the fiberwise one-point completions, defines a stable cohomotopy class 
		$$[f]\in\pi^0_{G,H}(Y;\ind l).$$
		This class is independent of the decomposition of $f$ as a sum. \hfill $\square$\par
	\end{theorem}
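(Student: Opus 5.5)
The plan is the standard finite-dimensional approximation argument, carried out fiberwise over the compact base $Y$ and equivariantly for the compact group $G$. Write $S(-)$ and $D(-)$ for sphere and disk bundles and $Th(-)$ for Thom spaces.

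\textbf{Step 1: quantitative properness.} The hypothesis that $f$ extends continuously to fiberwise one-point completions means that preimages of bounded sets are bounded; this is exactly what Proposition \ref{boundedness} supplies for the monopole map. From it I extract the following consequence. Fix $r>0$ and let $R$ be such that $f^{-1}(D_r)\subset D_R(E')$. Then
\[
\abs{f(v)}\geq r \quad \text{for all } v \text{ outside } D_R(E').
\]
I also want a uniform lower bound near the boundary sphere. Because $c$ is compact and $l$ is Fredholm, $f$ restricted to any bounded set is proper, so the set $f\bigl(S_{R'}(E')\bigr)$ is closed for any $R'>R$. It misses $D_r$ by choice of $R$, and hence stays a definite distance $\delta>0$ away from the origin.

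\textbf{Step 2: the finite-dimensional approximation.} Since $Y$ is compact and $l$ is fiberwise Fredholm, I choose a finite-dimensional $G$-invariant subspace $F\subset H$ with two properties.
\begin{itemize}
\item $\operatorname{im} l_y + F = H$ for every $y\in Y$.
\item The orthogonal projection $\operatorname{pr}_F$ satisfies $\abs{c - \operatorname{pr}_F c} < \delta/2$ on $D_{R'}(E')$.
\end{itemize}
The first property holds for $F$ large because cokernels of Fredholm operators are finite-dimensional and $Y$ is compact. The second uses that $c$ maps the bounded set $D_{R'}(E')$ into a precompact subset of $H$, by compactness of $c$ and of $Y$. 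Equivariance of $F$ is obtained by starting from any finite-dimensional subspace, taking its $G$-span, and approximating that span by a sum of finitely many isotypic pieces (Peter--Weyl). This step is where I expect the main technical obstacle: getting $F$ simultaneously $G$-invariant, transverse to $\operatorname{im} l$ uniformly over $Y$, and $\delta/2$-close for $c$ on the whole disk.

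With such $F$, set $E'_F := l^{-1}(F)$. By transversality this is a finite-dimensional $G$-vector bundle over $Y$, and $[E'_F] - [F] = \ind l$ in $KO_G(Y)$. Define
\[
f_F := l + \operatorname{pr}_F c \;:\; E'_F \to Y\times F .
\]
On $S_{R'}(E'_F)$ we have $\abs{f_F}\geq \abs{f} - \abs{c-\operatorname{pr}_F c} > \delta/2$. Therefore $f_F$ induces a pointed $G$-map
\[
Th(E'_F) = D_{R'}(E'_F)/S_{R'}(E'_F) \;\longrightarrow\; F/(F\setminus D_{\delta/2}) \simeq S^F ,
\]
which is an element of $\pi^0_{G,H}(Y;\ind l)$.

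\textbf{Step 3: independence of choices.} First, changing $R'$ or $\delta$ within the admissible ranges gives homotopic maps, by radial rescaling. Second, suppose $F\subset F'$ and write $W = F'\ominus F$. Then $l$ identifies $E'_{F'}\ominus E'_F$ with $W$ up to a compact perturbation. I use the linear homotopy
\[
l + \bigl(\operatorname{pr}_F + t\,\operatorname{pr}_W\bigr) c, \qquad t\in[0,1],
\]
which stays nonvanishing on the sphere by the same estimate as in Step 2. It shows that $f_{F'}$ is homotopic to the $W$-suspension of $f_F$, so the classes agree in the stable group.

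Finally, two decompositions $f = l + c = l' + c'$ differ by a compact linear map $k = l - l'$. The straight-line path $l_t = l' + t k$ consists of Fredholm maps, with $\ind l_t$ constant, and each $l_t + c_t$ still equals $f$. Choosing one $F$ that works uniformly in $t\in[0,1]$ (possible by compactness of $[0,1]$) gives a homotopy between the two approximations.

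Combined with Proposition \ref{boundedness}, this yields the first half of Theorem \ref{main theorem1} with $G=\Z_2$. Metric independence then follows by running the same construction over a path of metrics.
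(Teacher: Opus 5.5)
Your proposal is correct and follows the standard Bauer--Furuta finite-dimensional approximation argument; the paper gives no proof of its own and simply cites that argument. Two small points need tidying. First, isotypic components of $H$ can be infinite-dimensional, so approximate instead by finite sums of finite-dimensional irreducible subrepresentations; both required properties of $F$ survive such a small perturbation. Second, the $t=0$ end of your Step 3 homotopy is $(v_0,n)\mapsto(lv_0+\operatorname{pr}_F l n+\operatorname{pr}_F c(v_0+n),\operatorname{pr}_W l n)$, which becomes literally the $W$-suspension of $f_F$ only after one further deformation that scales $n$ to $0$ in the first component. That deformation is nonvanishing on the sphere because $\operatorname{pr}_W l$ is an isomorphism on the complement of $E'_F$ in $E'_{F'}$.
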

	
	\begin{corollary}\label{first half of theorem1}
		The monopole map defines an element $[\mu]$ in the stable cohomotopy group 
		$$\pi^0_{\Z_2,H}(\mathrm{Pic}^c(X);\theta)=\pi^{b^+(X;l)}_{\Z_2,H}(\mathrm{Pic}^c(X);\ind(D)),$$
		where $H$ is a Sobolev completion of $\Gamma(S^-)\oplus\Omega^+(X;\ii\lambda)$. The virtual index bundle $\theta=\ind(D)\ominus H_+$ is the difference of virtual index bundle of the Dirac operator over $\mathrm{Pic}^c(X)$ and the trivial bundle $H_+$ with fiber $H^{2,+}(X;\ii\lambda)$. The $\Z_2$ action on $\ind(D)$ is given by multiplication with $\pm 1$ and on $H_+$ is trivial. \hfill $\square$\par
	\end{corollary}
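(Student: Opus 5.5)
The plan is to apply Theorem \ref{critical theorem} to $\mu=l+c:\A_k\to\mathcal{C}_{k-1}$ over $Y=\mathrm{Pic}^c(X)$ with $G=\Z_2$. This requires three things: that the target is trivial, that $\mu$ is Fredholm in the required sense, and that $\mu$ extends continuously to the fiberwise one-point completions. After that we identify the index.

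\textbf{Trivializing the target.} The gauge group $\mathcal{K}_{\gamma}$ acts trivially on forms. Hence the form summands $\Omega^+(X;\ii\lambda)\oplus\Omega^0(X;\ii\lambda)\oplus H^1(X;\ii\lambda)$ of $\mathcal{C}$ are already trivial bundles over $\mathrm{Pic}^c(X)$. The spinor part $\Gamma(S^-)\times_{\mathcal{K}_\gamma}(A+\ker d)$ is a Hilbert bundle with contractible structure group (by Kuiper's theorem), so it is trivial. Thus $\mathcal{C}_{k-1}\cong \mathrm{Pic}^c(X)\times H'$ equivariantly, where $\Z_2=\{\pm1\}$ acts by $-1$ on spinors and trivially on forms.

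\textbf{Fredholm decomposition and properness.} We use the decomposition $\mu=l+c$ given before Proposition \ref{boundedness}. Here $l=(D_A,d^++d^*+pr)$ is a fiberwise linear Fredholm family, and $c$ is compact: it is a constant plus a quadratic term composed with the compact embedding $L^2_k\hookrightarrow L^2_{k-1}$. Proposition \ref{boundedness} says that preimages of bounded disk bundles lie in bounded disk bundles. Combined with compactness of $c$, this shows $\mu$ is proper on bounded sets and sends a neighborhood of infinity to a neighborhood of infinity. Therefore $\mu$ extends continuously to the fiberwise one-point completions by sending $\infty\mapsto\infty$. Equivariance holds because $\mu$ is the $\mathcal{K}_\gamma$-quotient of a $\G$-equivariant map and $\G/\mathcal{K}_\gamma\cong\{\pm1\}$ realizes the $\Z_2$. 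Theorem \ref{critical theorem} then gives $[\mu]\in\pi^0_{\Z_2,H'}(\mathrm{Pic}^c(X);\ind l)$.

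\textbf{Index computation.} Split $l$ into its Dirac part and the map $d^++d^*+pr:\Omega^1(X;\ii\lambda)\to\Omega^+\oplus\Omega^0\oplus H^1$. The kernel of the form part is $\ker d\cap\ker d^*\cap\ker pr$, which vanishes on $\ii\lambda$-valued harmonic 1-forms after projection. Its cokernel is $H^{2,+}(X;\ii\lambda)\oplus H^0(X;\ii\lambda)$. Since $\lambda$ is nontrivial, $H^0(X;\ii\lambda)=0$, so the cokernel is exactly $H^{2,+}$, a trivial bundle with trivial $\Z_2$ action. The Dirac part contributes the virtual bundle $\ind(D)$ over $\mathrm{Pic}^c(X)$, with $\Z_2$ acting by $\pm1$. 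Therefore $\ind l=\ind(D)\ominus H_+=\theta$.

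By the standard suspension convention, $\pi^0_{\Z_2,H}(\mathrm{Pic}^c(X);\theta)=\pi^{b^+(X;l)}_{\Z_2,H}(\mathrm{Pic}^c(X);\ind(D))$. We may replace $H'$ by the universe $H$ generated by $\Gamma(S^-)\oplus\Omega^+(X;\ii\lambda)$: the $\Omega^0$ summand is hit isomorphically by $d^*$ on $(\ker d^*)^\perp$, and the $H^1$ summand is hit isomorphically by $pr$, so both can be desuspended, exactly as in the reduction used in the proof of Proposition \ref{boundedness}.

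\textbf{Main obstacle.} I expect the main difficulty to be the careful check that the boundedness statement, proved over one fiber, holds uniformly over the compact base $\mathrm{Pic}^c(X)$. My plan is to use compactness of the torus: the constants in Proposition \ref{boundedness} depend continuously on the base connection $A$, so they can be chosen uniformly over $\mathrm{Pic}^c(X)$.
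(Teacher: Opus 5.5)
Your proposal is correct and follows the paper's route. The corollary comes from feeding the decomposition $\mu=l+c$ and Proposition \ref{boundedness} into Bauer--Furuta's Theorem \ref{critical theorem}; your trivialization of the target, equivariance check and computation $\ind l=\ind(D)\ominus H_+$ fill in what the paper leaves implicit. One small remark: replacing $H'$ by $H$ needs no ``desuspension'' via $d^*$ and $pr$, which would be awkward anyway because the exact and harmonic parts of $a$ enter $D_{A+a}\phi$ nonlinearly; $H'$ and $H$ are simply isomorphic $\Z_2$-universes, since $\Gamma(S^-)\oplus\Omega^+(X;\ii\lambda)$ already contains infinitely many copies of both the trivial and the sign representation.
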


	As in the Bauer--Furuta theory, there exists a comparison map from the stable equivariant cohomotopy group to $\Z_2$, relating $[\mu]$ with the $\Z_2$ valued $Pin^{-}(2)$ monopole invariant:
	
	\begin{proposition}
		Let $\tilde{X}\to X$ be as above with $b^{+}(X;l)>b_1(X;l)+1$, then we have an homomorphism $t: \pi^{b^+(X;l)}_{\Z_2,H}(\mathrm{Pic}^c(X);\ind(D))\to \Z_2$ which maps the class of the monopole map to the $\Z_2$ valued $Pin^{-}(2)$ monopole invariant.
		\begin{proof}
			any element in $\pi^{b^+(X;l)}_{\Z_2,H}(Pic^c(X);\ind(D))$ is represented by a pointed equivariant map
			$$\mu:TF\to V^+$$
			from the Thom space of a bundle $F$ over $\mathrm{Pic}^c(X)$ to a sphere $V^+=(V'\oplus H^{2,+}(X;\lambda))^+$, where $F-\mathrm{Pic}^c(X)\times V'$ represents the equivariant virtual index bundle of the Dirac operator. The $\Z_2$ valued $Pin^{-}(2)$ monopole invariant is constructed as follows. After possible perturbation of the map $\mu$, the $\Z_2$ fixed point set $TF^{\Z_2}$ is mapped to a subspace of $(V^{\Z_2})^+$ of codimension at least $b^+(X;l)-b_1(X;l)\geq 2$. After possible perturbing further, the preimage of a generic point in the complement is a manifold $M$ with a free $\Z_2$ action. The dimension of $M$ is $ind_{\R}(D)-b^+(X;l)+b_1(X;l)=k$, and the $Pin^{-}(2)$ monopole invariant is the evaluation of the Stiefel-Whitney class of the real vector bundle $(M\times \R^k)/{\Z_2}$ over $M/{\Z_2}$ at the fundamental class.
		\end{proof}
	\end{proposition}
	
	\begin{remark}
    In fact, the assumption $b^{+}(X;l)>b_1(X;l)+1$ can be improved. Following \cite{bauer2004refined}, the monopole map $\mu:\A\to\mathcal{C}$ defines an element in the equivariant stable cohomotopy group $\pi^0_{\Z_2,H}(Q(X,c,V'))$, where the spectrum $Q(X,c,V')$ is defined as $\Sigma^{-V'-H^{2,+}(X;\lambda)}(TF/{\mathrm{Pic}(X)}),$ and there exists a natural homomorphism of this stable cohomotopy group to $\Z_2$.
   		
	\end{remark}
	
	If $b_1(X;l)=0$, the group $\pi^{b^+(X;l)}_{\Z_2,H}(\mathrm{Pic}^c(X);\ind(D))$ simplifies: the index of the Dirac operator is a $d$-dimensional nontrivial $\R$ vector space, where $d=\frac{c_1(c)^2-\sigma(X)}{4}.$
	
	\begin{proposition}\label{isomorphism between equivariant and nonequivariant}
		For $i>1$, the stable equivariant cohomotopy groups $\pi^i_{\Z_2,H}(*;\tilde{\R}^{d})$ are isomorphic to the nonequivariant stable cohomotopy groups $\pi^{i-1}(\R P^{d-1})$ of real projective $(d-1)$-space. In particular, if $X$ is closed $4$-manifold with $b_1(X;l)=0$ and $b^+(X;l)>1$, then the monopole map determines an element in $\pi^{b^+(X;l)-1}(\R P^{d-1})$.
		\begin{proof}
			The long exact stable cohomotopy sequence for the pair$(D(\tilde{\R}^{d}),S(\tilde{\R}^{d}))$ states that
			\begin{align*}
				\cdots\to\tilde{\pi}^{i-1}_{\Z_2,H}(D(\tilde{\R}^{d})^+,S(\tilde{\R}^{d})^+)\to &\tilde{\pi}^{i-1}_{\Z_2,H}(D(\tilde{\R}^{d})^+)\to \tilde{\pi}^{i-1}_{\Z_2,H}(S(\tilde{\R}^{d})^+)\\
				&\to\tilde{\pi}^{i}_{\Z_2,H}(D(\tilde{\R}^{d})^+,S(\tilde{\R}^{d})^+)\to\tilde{\pi}^{i}_{\Z_2,H}(D(\tilde{\R}^{d})^+)\to\cdots,
			\end{align*}
			since the disk $D(\tilde{\R}^{d})$is $\Z_2$-equivariantly contractible, the groups $\tilde{\pi}^i_{\Z_2,H}(D(\tilde{\R}^{d})^+)$ vanish for all $i$, hence $\tilde{\pi}^{i-1}_{\Z_2,H}(S(\tilde{\R}^{d})^+)\cong \tilde{\pi}^{i}_{\Z_2,H}(D(\tilde{\R}^{d})^+,S(\tilde{\R}^{d})^+)$, which is isomorphic to  $\tilde{\pi}^{i}_{\Z_2,H}(D(\tilde{\R}^{d})^+/(S(\tilde{\R}^{d})^+))$.   
			We have $\pi^i_{\Z_2,H}(*;\tilde{\R}^{d}):= \tilde{\pi}^i_{\Z_2,H}((\tilde{\R}^{d})^+)\cong \tilde{\pi}^i_{\Z_2,H}(D(\tilde{\R}^{d})^+/(S(\tilde{\R}^{d})^+))\cong \tilde{\pi}^{i-1}_{\Z_2,H}(S(\tilde{\R}^{d})^+).$ But for the free $\Z_2$-space $S(\tilde{\R}^{d})$ equivariant cohomotopy is isomorphic to the nonequivariant of its quotient. Therefore, $\tilde{\pi}^{i-1}_{\Z_2,H}(S(\tilde{\R}^{d})^+)$ is isomorphic to $\tilde{\pi}^{i-1}((\R P^{d-1})^+)\cong{\pi}^{i-1}(\R P^{d-1}).$
		\end{proof}
	\end{proposition}
	
	Let $k=d-b^+(X;l)$ be the virtual dimension for the moduli space, then we have the following computational results on $\pi^{d-1-k}(\R P^{d-1})$ for small $k$.

	\begin{lemma}\label{main lemma}
	Let $d>1$ be an integer. The Hurewicz map of reduced cohomology groups is defined as following
	\begin{align*}
		h^{d-1-k}:\tilde{\pi}^{d-1-k}(\R P^{d-1}) &\to \tilde{H}^{d-1-k}(\R P^{d-1}),\\
		[f]&\mapsto f^*(1),
	\end{align*}
	with $1\in H^{d-1-k}(S^{d-1-k})\cong \tilde{H}^0(S^0)$ defined by orientation.
    
    %$$d_2:E_2^{p,-1}=H^p(RP^d;\pi^{-1}_{st}(*)=\Z_2)=\Z_2\to E_2^{p+2,-2}=H^{p+2}(RP^d;\pi^{-2}_{st}(*)=\Z_2)=\Z_2$$
    
    \begin{enumerate}
		\item For $k=0$, it is an isomorphism.
		\item For $k=1$, the cokernel is trivial and the kernel is trivial in the case $d=2m+1$ and $m$ even, and  isomrphic to $\Z_2$ otherwise.
		\item For $k=2$, 
		\begin{enumerate}
			\item for $d=2m$, $m$ odd, kernel is trivial or isomorphic to $\Z_2$.
			\item for $d=2m$, $m$ even, the kernel is isomorphic to a group of order $2$ or $4$.
			\item for $d=2m+1$, $m$ odd, the kernel is isomorphic to a group of order $4$.
			\item for $d=2m+1$, $m$ even, the kernel is isomorphic to a group of order $2$ or $4$ and the cokernel is $\Z_2$.
		\end{enumerate}
		%\item For $k=3$, 
		%\begin{enumerate}
		%	\item for $d=2m$, $m$ odd, it has kernel isomorphic to $\Z_{12}$ and cokernel isomorphic to $\Z_2$.
		%	\item for $d=2m$, $m$ even, the kernel is isomorphic to $\Z_2\oplus\Z_2\oplus\Z_{24}$.
		%	\item for $d=2m+1$, $m$ odd, the kernel is isomorphic to $\Z_2$.
			%\item for $d=2m+1$, $m$ even, i dont know whether d_3: E_3^{2m-4,0}\to E_3^{2m-1,-2}$ is isomorphism or $0$ depending on $d$.
		%\end{enumerate}
		%\item For $k=4$,
		%\begin{enumerate}
		%	\item for $d=2m$, $m$ odd, it is trivial as it is the map between trivial groups.
		%	\item for $d=2m$, $m$ even, the kernel is isomorphic to $\Z_2$.
		%	\item for $d=2m+1$, $m$ odd, the cokernel is isomorphic to $\Z_2$.
		%	\item for $d=2m+1$, $m$ even, the kernel contains a subgroup isomorphic to $\Z_2\oplus\Z_2$.
		%\end{enumerate}
	\end{enumerate}
	
	\begin{proof}
	Using the Atiyah-Hirzebruch spectral sequence with $E_2$-term 
	$$H^p(\R P^{d-1};\pi^{q}_{st}(pt))\Rightarrow \pi^{p+q}_{st}(\R P^{d-1}),$$
	$d_2:E_2^{p,0}\to E_2^{p+2,-1}$ detects the Steenrod square $\operatorname{Sq}^2$, and the facts that $\pi^{-1}_{st}(*)\cong\Z_2$ is generated by the Hopf map $\eta:S^3\to S^2$, $\pi^{-2}_{st}(*)\cong\Z_2$ is generated by $\eta^2$, and $\pi^{-3}_{st}(*)\cong\Z_{24}$ is generated by $\nu:S^7\to S^4$ satisfying $\eta^3=12\nu$, we have:

	\begin{enumerate}
		\item For $d=2m, m\geq 1$, The $E_2$ page is 
		
        \begin{sseq}[xlabelstep=1,ylabelstep=1,grid=chess,
        	xlabels={0;1;2;3;{...};2m-6;2m-5;2m-4;2m-3;2m-2;2m-1},ylabels={0;-1;-2;-3;-4},entrysize=12mm]
        	{0...10}{0...4}
        	\ssdrop{\Z}\ssmove{1}{0}\ssdrop{0}\ssmove{1}{0}\ssdrop{\Z_2}\ssmove{1}{0}\ssdrop{0}\ssmove{1}{0}\ssdrop{\cdots}\ssmove{1}{0}\ssdrop{\Z_2}\ssmove{1}{0}\ssdrop{0}\ssmove{1}{0}\ssdrop{\Z_2}\ssmove{1}{0}\ssdrop{0}\ssmove{1}{0}\ssdrop{\Z_2}\ssmove{1}{0}\ssdrop{\Z}
        	\ssmoveto{0}{1}\ssdrop{\Z_2}\ssmove{1}{0}\ssdrop{\Z_2}\ssmove{1}{0}\ssdrop{\Z_2}\ssmove{1}{0}\ssdrop{\Z_2}\ssmove{1}{0}\ssdrop{\cdots}\ssmove{1}{0}\ssdrop{\Z_2}\ssmove{1}{0}\ssdrop{\Z_2}\ssmove{1}{0}\ssdrop{\Z_2}\ssmove{1}{0}\ssdrop{\Z_2}\ssmove{1}{0}\ssdrop{\Z_2}\ssmove{1}{0}\ssdrop{\Z_2}
        	\ssmoveto{0}{2}\ssdrop{\Z_2}\ssmove{1}{0}\ssdrop{\Z_2}\ssmove{1}{0}\ssdrop{\Z_2}\ssmove{1}{0}\ssdrop{\Z_2}\ssmove{1}{0}\ssdrop{\cdots}\ssmove{1}{0}\ssdrop{\Z_2}\ssmove{1}{0}\ssdrop{\Z_2}\ssmove{1}{0}\ssdrop{\Z_2}\ssmove{1}{0}\ssdrop{\Z_2}\ssmove{1}{0}\ssdrop{\Z_2}\ssmove{1}{0}\ssdrop{\Z_2}
        	\ssmoveto{0}{3}\ssdrop{\Z_{24}}\ssmove{1}{0}\ssdrop{\Z_2}\ssmove{1}{0}\ssdrop{\Z_2}\ssmove{1}{0}\ssdrop{\Z_2}\ssmove{1}{0}\ssdrop{\cdots}\ssmove{1}{0}\ssdrop{\Z_2}\ssmove{1}{0}\ssdrop{\Z_2}\ssmove{1}{0}\ssdrop{\Z_2}\ssmove{1}{0}\ssdrop{\Z_2}\ssmove{1}{0}\ssdrop{\Z_2}\ssmove{1}{0}\ssdrop{\Z_{24}}
        	\ssmoveto{0}{4}\ssdrop{0}\ssmove{1}{0}\ssdrop{0}\ssmove{1}{0}\ssdrop{0}\ssmove{1}{0}\ssdrop{0}
        	\ssmove{1}{0}\ssdrop{\cdots}\ssmove{1}{0}\ssdrop{0}\ssmove{1}{0}\ssdrop{0}\ssmove{1}{0}\ssdrop{0}\ssmove{1}{0}\ssdrop{0}\ssmove{1}{0}\ssdrop{0}\ssmove{1}{0}\ssdrop{0}
        \end{sseq}
        \begin{itemize}
        	\item For $m$ an odd number, the $E_3$ page is
        	
        	\begin{sseq}[xlabelstep=1,ylabelstep=1,grid=chess,
        		xlabels={0;1;2;3;{...};2m-6;2m-5;2m-4;2m-3;2m-2;2m-1},ylabels={0;-1;-2},entrysize=12mm]
        		{0...10}{0...2}
                \ssdrop{\Z}\ssmove{1}{0}\ssdrop{0}\ssmove{1}{0}\ssdrop{0}\ssmove{1}{0}\ssdrop{0}\ssmove{1}{0}\ssdrop{\cdots}\ssmove{1}{0}\ssdrop{\Z_2}\ssmove{1}{0}\ssdrop{0}\ssmove{1}{0}\ssdrop{0}\ssmove{1}{0}\ssdrop{0}\ssmove{1}{0}\ssdrop{\Z_2}\ssmove{1}{0}\ssdrop{\Z}
                \ssmoveto{0}{1}\ssdrop{*}\ssmove{1}{0}\ssdrop{*}\ssmove{1}{0}\ssdrop{*}\ssmove{1}{0}\ssdrop{*}\ssmove{1}{0}\ssdrop{\cdots}\ssmove{1}{0}\ssdrop{0}\ssmove{1}{0}\ssdrop{*}\ssmove{1}{0}\ssdrop{*}\ssmove{1}{0}\ssdrop{*}\ssmove{1}{0}\ssdrop{0}\ssmove{1}{0}\ssdrop{\Z_2}
                \ssmoveto{0}{2}\ssdrop{*}\ssmove{1}{0}\ssdrop{*}\ssmove{1}{0}\ssdrop{*}\ssmove{1}{0}\ssdrop{*}\ssmove{1}{0}\ssdrop{\cdots}\ssmove{1}{0}\ssdrop{*}\ssmove{1}{0}\ssdrop{*}\ssmove{1}{0}\ssdrop{*}\ssmove{1}{0}\ssdrop{*}\ssmove{1}{0}\ssdrop{*}\ssmove{1}{0}\ssdrop{*}
        	\end{sseq}
        	
        	\item And for $m$ an even number, the $E_3$ page is
        	
        	\begin{sseq}[xlabelstep=1,ylabelstep=1,grid=chess,
        		xlabels={0;1;2;3;{...};2m-6;2m-5;2m-4;2m-3;2m-2;2m-1},ylabels={0;-1;-2},entrysize=12mm]
        		{0...10}{0...2}
        		\ssdrop{\Z}\ssmove{1}{0}\ssdrop{0}\ssmove{1}{0}\ssdrop{0}\ssmove{1}{0}\ssdrop{0}\ssmove{1}{0}\ssdrop{\cdots}\ssmove{1}{0}\ssdrop{0}\ssmove{1}{0}\ssdrop{0}\ssmove{1}{0}\ssdrop{\Z_2}\ssmove{1}{0}\ssdrop{0}\ssmove{1}{0}\ssdrop{\Z_2}\ssmove{1}{0}\ssdrop{\Z}
        		\ssmoveto{0}{1}\ssdrop{*}\ssmove{1}{0}\ssdrop{*}\ssmove{1}{0}\ssdrop{*}\ssmove{1}{0}\ssdrop{*}\ssmove{1}{0}\ssdrop{\cdots}\ssmove{1}{0}\ssdrop{*}\ssmove{1}{0}\ssdrop{*}\ssmove{1}{0}\ssdrop{0}\ssmove{1}{0}\ssdrop{*}\ssmove{1}{0}\ssdrop{\Z_2}\ssmove{1}{0}\ssdrop{\Z_2}
        		\ssmoveto{0}{2}\ssdrop{*}\ssmove{1}{0}\ssdrop{*}\ssmove{1}{0}\ssdrop{*}\ssmove{1}{0}\ssdrop{*}\ssmove{1}{0}\ssdrop{\cdots}\ssmove{1}{0}\ssdrop{*}\ssmove{1}{0}\ssdrop{*}\ssmove{1}{0}\ssdrop{*}\ssmove{1}{0}\ssdrop{*}\ssmove{1}{0}\ssdrop{\Z_2}\ssmove{1}{0}\ssdrop{*}
        	\end{sseq}
        \end{itemize}
		
		\item For $d=2m+1,m\geq 0$, the $E_2$ page is
	    
	    \begin{sseq}[xlabelstep=1,ylabelstep=1,grid=chess,
	    	xlabels={0;1;2;3;{...};2m-5;2m-4;2m-3;2m-2;2m-1;2m},ylabels={0;-1;-2;-3;-4},entrysize=12mm]
	    	{0...10}{0...4}
	    	\ssdrop{\Z}\ssmove{1}{0}\ssdrop{0}\ssmove{1}{0}\ssdrop{\Z_2}\ssmove{1}{0}\ssdrop{0}\ssmove{1}{0}\ssdrop{\cdots}\ssmove{1}{0}\ssdrop{0}\ssmove{1}{0}\ssdrop{\Z_2}\ssmove{1}{0}\ssdrop{0}\ssmove{1}{0}\ssdrop{\Z_2}\ssmove{1}{0}\ssdrop{0}\ssmove{1}{0}\ssdrop{\Z_2}
	    	\ssmoveto{0}{1}\ssdrop{\Z_2}\ssmove{1}{0}\ssdrop{\Z_2}\ssmove{1}{0}\ssdrop{\Z_2}\ssmove{1}{0}\ssdrop{\Z_2}\ssmove{1}{0}\ssdrop{\cdots}\ssmove{1}{0}\ssdrop{\Z_2}\ssmove{1}{0}\ssdrop{\Z_2}\ssmove{1}{0}\ssdrop{\Z_2}\ssmove{1}{0}\ssdrop{\Z_2}\ssmove{1}{0}\ssdrop{\Z_2}\ssmove{1}{0}\ssdrop{\Z_2}
	    	\ssmoveto{0}{2}\ssdrop{\Z_2}\ssmove{1}{0}\ssdrop{\Z_2}\ssmove{1}{0}\ssdrop{\Z_2}\ssmove{1}{0}\ssdrop{\Z_2}\ssmove{1}{0}\ssdrop{\cdots}\ssmove{1}{0}\ssdrop{\Z_2}\ssmove{1}{0}\ssdrop{\Z_2}\ssmove{1}{0}\ssdrop{\Z_2}\ssmove{1}{0}\ssdrop{\Z_2}\ssmove{1}{0}\ssdrop{\Z_2}\ssmove{1}{0}\ssdrop{\Z_2}
	    	\ssmoveto{0}{3}\ssdrop{\Z_{24}}\ssmove{1}{0}\ssdrop{\Z_2}\ssmove{1}{0}\ssdrop{\Z_2}\ssmove{1}{0}\ssdrop{\Z_2}\ssmove{1}{0}\ssdrop{\cdots}\ssmove{1}{0}\ssdrop{\Z_2}\ssmove{1}{0}\ssdrop{\Z_2}\ssmove{1}{0}\ssdrop{\Z_2}\ssmove{1}{0}\ssdrop{\Z_2}\ssmove{1}{0}\ssdrop{\Z_2}\ssmove{1}{0}\ssdrop{\Z_2}
	    	\ssmoveto{0}{4}\ssdrop{0}\ssmove{1}{0}\ssdrop{0}\ssmove{1}{0}\ssdrop{0}\ssmove{1}{0}\ssdrop{0}
	    	\ssmove{1}{0}\ssdrop{\cdots}\ssmove{1}{0}\ssdrop{0}\ssmove{1}{0}\ssdrop{0}\ssmove{1}{0}\ssdrop{0}\ssmove{1}{0}\ssdrop{0}\ssmove{1}{0}\ssdrop{0}\ssmove{1}{0}\ssdrop{0}
	    \end{sseq}
	    
	    \begin{itemize}
	    	\item For $m$ an odd number, the $E_3$ page is 
	    	
	        \begin{sseq}[xlabelstep=1,ylabelstep=1,grid=chess,
	    		xlabels={0;1;2;3;{...};2m-5;2m-4;2m-3;2m-2;2m-1;2m},ylabels={0;-1;-2},entrysize=12mm]
	    		{0...10}{0...2}
	    		\ssdrop{\Z}\ssmove{1}{0}\ssdrop{0}\ssmove{1}{0}\ssdrop{0}\ssmove{1}{0}\ssdrop{0}\ssmove{1}{0}\ssdrop{\cdots}\ssmove{1}{0}\ssdrop{0}\ssmove{1}{0}\ssdrop{0}\ssmove{1}{0}\ssdrop{0}\ssmove{1}{0}\ssdrop{\Z_2}\ssmove{1}{0}\ssdrop{0}\ssmove{1}{0}\ssdrop{\Z_2}
	    		\ssmoveto{0}{1}\ssdrop{*}\ssmove{1}{0}\ssdrop{*}\ssmove{1}{0}\ssdrop{*}\ssmove{1}{0}\ssdrop{*}\ssmove{1}{0}\ssdrop{\cdots}\ssmove{1}{0}\ssdrop{*}\ssmove{1}{0}\ssdrop{*}\ssmove{1}{0}\ssdrop{*}\ssmove{1}{0}\ssdrop{0}\ssmove{1}{0}\ssdrop{\Z_2}\ssmove{1}{0}\ssdrop{\Z_2}
	    		\ssmoveto{0}{2}\ssdrop{*}\ssmove{1}{0}\ssdrop{*}\ssmove{1}{0}\ssdrop{*}\ssmove{1}{0}\ssdrop{*}\ssmove{1}{0}\ssdrop{\cdots}\ssmove{1}{0}\ssdrop{*}\ssmove{1}{0}\ssdrop{*}\ssmove{1}{0}\ssdrop{*}\ssmove{1}{0}\ssdrop{*}\ssmove{1}{0}\ssdrop{*}\ssmove{1}{0}\ssdrop{\Z_2}
	    	\end{sseq}
	    	
	    	\item And for $m$ an even number, the $E_3$ page is 
	    	
	        \begin{sseq}[xlabelstep=1,ylabelstep=1,grid=chess,
	    		xlabels={0;1;2;3;{...};2m-5;2m-4;2m-3;2m-2;2m-1;2m},ylabels={0;-1;-2},entrysize=12mm]
	    		{0...10}{0...2}
	    		\ssdrop{\Z}\ssmove{1}{0}\ssdrop{0}\ssmove{1}{0}\ssdrop{0}\ssmove{1}{0}\ssdrop{0}\ssmove{1}{0}\ssdrop{\cdots}\ssmove{1}{0}\ssdrop{0}\ssmove{1}{0}\ssdrop{\Z_2}\ssmove{1}{0}\ssdrop{0}\ssmove{1}{0}\ssdrop{0}\ssmove{1}{0}\ssdrop{0}\ssmove{1}{0}\ssdrop{\Z_2}
	    		\ssmoveto{0}{1}\ssdrop{*}\ssmove{1}{0}\ssdrop{*}\ssmove{1}{0}\ssdrop{*}\ssmove{1}{0}\ssdrop{*}\ssmove{1}{0}\ssdrop{\cdots}\ssmove{1}{0}\ssdrop{*}\ssmove{1}{0}\ssdrop{0}\ssmove{1}{0}\ssdrop{*}\ssmove{1}{0}\ssdrop{*}\ssmove{1}{0}\ssdrop{\Z_2}\ssmove{1}{0}\ssdrop{0}
	    		\ssmoveto{0}{2}\ssdrop{*}\ssmove{1}{0}\ssdrop{*}\ssmove{1}{0}\ssdrop{*}\ssmove{1}{0}\ssdrop{*}\ssmove{1}{0}\ssdrop{\cdots}\ssmove{1}{0}\ssdrop{*}\ssmove{1}{0}\ssdrop{*}\ssmove{1}{0}\ssdrop{*}\ssmove{1}{0}\ssdrop{*}\ssmove{1}{0}\ssdrop{*}\ssmove{1}{0}\ssdrop{*}
	    	\end{sseq}
	    \end{itemize}
	\end{enumerate}
	Here ''$*$'' represents $0$ or $\Z_2$ and all conclusions can be read off from these diagrams.
    \end{proof}
    \end{lemma}
	
	%\begin{remark}
		The nontriviality of the kernel of the Hurewicz map suggests that the stable cohomotopy invariant may be a refinement of the $Pin^{-}(2)$-monopole invariant.
	%\end{remark}
	
    \begin{remark}
		Let $\tilde{X}\to X$ be as above with $b_1(X;l)=0$, $b^{+}(X;l)>1$ and suppose $\ind(D)$ is odd, $k=\ind(D)-b^{+}(X;l)=0$, then we have an homomorphism $t: \pi^{b^+(X;l)}_{\Z_2,H}(*;\ind(D))\to \Z$ which is given by the Hurewicz map as in lemma \ref{main lemma}. As stated in \cite{nakamura2015pin}, this maps to the integer valued $Pin^{-}(2)$ monopole invariant. 
	\end{remark}	
	
	\begin{example}\label{classical example Enriques surface}
		Let $(N,c)$ be an Enriques surface equipped with the canonical $Spin^{c-}$ structure $c$ with associated $O(2)$-bundle isomorphic to $\underline{\R}\oplus(l_{K}\otimes\R)$, which arises from a double cover $K\to N$ where $K$ is a $K3$ surface.Then this $Spin^{c-}$ structure $c$ satisfies $b_1(N;l)=0,b^+(N;l)=2$ and $d=ind(D_A)=\frac{\tilde{c}_1(c)^2-\sigma(X)}{4}=2$, which defines an element $[\mu]\in\pi^2_{\Z_2,H}(*;\tilde{\R}^2)=\pi^1(\R P^1)\cong\pi^1(S^1)\cong\pi_0^{st}(S^0)\cong\Z.$ It is computed in \cite{nakamura2020real} that the integer valued $Pin^-(2)$ invariant is equal to $1$ if we specify an orientation of the moduli space. Therefore, $[\mu]$ represents a generator of $\pi^2_{\Z_2,H}(*;\tilde{\R}^2)$, which can be written as $f:\tilde{\R}^2\to \R^2$. By restricting $f$ to the unit spheres, we obtain $\overline{f}:S(\tilde{\R}^2)\to S(R^2)$, which nonequivariantly is a map of degree $2$.
	\end{example}

	\begin{example}[Elliptic surfaces]
		Let $\hat{X}(4m+2)=X(4m+2)/{\iota}$ be the manifold obtained by quotienting the action of an anti-holomorphic free involution $\iota$ on $X(4m+2)$. Then for the canonical $Spin^{c-}$ structure $c_0$ associated to the double cover $X(4m+2)\to \hat{X}(4m+2)$, we have $b_1(\hat{X}(4m+2);l)=0,d=4m+2$ and $k=0$. it is computed \cite{nakamura2020real} that $SW^{Pin}_{\Z}(\hat{X}(4m+2),s)=\pm \binom{2m}{0}=\pm 1$. It turns out that $[\mu(\hat{X}(4m+2))]\in \pi^{4m+2}_{\Z_2,H}(*;\tilde{\R}^{4m+2})=\pi^{4m+1}_{st}(\R P^{4m+1})\cong\Z$ is a generator of this cohomotopy group, which can be represented by a map $q_{4m+1}:\R P^{4m+1}\to S^{4m+1}$ collapsing the subspace $\R P^{4m}$ of $\R P^{4m+1}$ to a point.
	\end{example}

	\section{Connected sum formula}\label{Connected sum formula}
	
	%\subsection{Formulation of the theorem}
		
	\subsection{The setup}
	Let $X=\bigsqcup_{i=1}^n X_i$ be the disjoint union of $n$ closed connected oriented Riemannian $4$-manifolds $X_i$. Suppose each component contains a long neck $N(L)_i=S^3\times[-L,L]$ so that each $X_i=X_i^-\cup X_i^+$ is a union of submanifolds with common boundary $\partial X_i^{\pm}=S^3\times\{0\}$. Also suppose the $Spin^{c-}$-structures are twisted on $X_1^-$, untwisted on other $X_i^{\pm}$s, and $b_1(X_1;l_1)=0$, and $b_1(X_i^{\pm})=0$ for all other $X_i^{\pm}$. The radius of the neck is assumed to be equal in all components and its length $2L$ is assumed to be greater than $8$.
	
	For $\tau\in S_n$ an even permutation, let $X^{\tau}$ be the manifold obtained from $X$ by interchanging the positive parts of its components, 
	$$X_i^{\tau}=X_i^-\cup X_{\tau(i)}^+.$$
	We will compare the stable cohomotopy invariants of $X$ and $X^{\tau}$, and the connected sum formula will be a consequence.
	
	Let $A$ denote a $Spin^{c-}$-connection  which induce the flat connection on the characteristic $O(2)$-bundle $E$ over the long neck. Fix identifications of the spinor bundles and the chosen $Spin^{c-}$-connections over the $n$ copies of $S^3\times[-L,L]$ in $X$.
	
	%Consider the gauge subgroup $\G_r\subset\G$ consisting of gauge transformations which are trivial over the short neck $N(1)=\bigsqcup_{i=1}^n(S^3\times[-1,1])$, i.e. $\G_r=\{g\in\G| g_{|N(1)}\equiv 1\}$. The group $\G_r$ decomposes into a product of gauge groups, each corresponding to one of $X_i^{\pm}$. Let $\ker d_r\subset\ker d$ denote the space of closed $\ii\lambda$-valued $1$-forms on $X$ vanishing identically on the short neck. Using the identification of the connections $A$ and $A^{\tau}$ over the short neck, the space $A+\ker d_r\cong A^{\tau}+\ker d_r$ can be viewed as a subspace of the space of $Spin^{c-}$-connections both over $X$ and $X^{\tau}$. Therefore, $A+\ker d_r/{\G_r}$ is identified with $Pic^c(X)=Pic^c(X^{\tau})=*.$
	
	Let $\A$ and $\mathcal{C}$ denote
	\begin{align*}
		\A&=\Gamma(S^+)\oplus\ker d^*,\\
		\mathcal{C}&=\Gamma(S^-)\oplus\Omega^+(X;\ii\lambda).  
	\end{align*}
	
	The monopole map $\mu:\A\to\mathcal{C}$ is a $\Z_2$-equivariant (restricting to the diagonal subgroup $\Z_2\subset \Z_2 \times ({\times_{i=2}^{n}} S^1)$) map defined by 
	$$(\phi,a) \to (D_{A+a}{\phi},F_{A+a}^+-q(\phi)).$$
	
	Note here we use another construction for the monopole map, different from that of section \ref{The monopole map and the stable cohomotopy invariant}.
	%Let $\A$ and $\mathcal{C}$ denote the quotients
	%\begin{align*}
		%&\A=(A+\ker d_r)\times (\Gamma(S^+)\oplus\Omega^1(X;\ii\lambda)\oplus H^1(X;\ii\lambda))/{\G_r}\\
		%&\mathcal{C}=(A+\ker d_r)\times (\Gamma(S^-)\oplus\Omega^+(X;\ii\lambda)\oplus\Omega^0(X;\ii\lambda))/{\G_r}
	%\end{align*}
	%by the action of the gauge group. Both spaces are just Hilbert space and the monopole map
	
	%$$\mu=\tilde{\mu}/{G_r}:\A\to\mathcal{C}$$
	
	%is a $\Z_2$-equivariant (restricting to the diagonal subgroup $\Z_2\subset \Z_2 \oplus ({\bigoplus_{i=2}^{n}} S^1)$) map defined by
	%$$(A',\phi,a) \to (A',D_{A'+a}{\phi},F_{A'+a}^+-q(\phi),d^*a+f).$$
	The Sobolev completions are $L_k^2$ on $\A$ and $L_{k-1}^2$ on $\mathcal{C}$ for $k\geq 4$.
	
	Consider a smooth path 
	$$\psi:[0,1]\to SO(n)$$
	such that $\psi(0)=\operatorname{Id}, \psi(1)=\tau,$ here $\tau$ is considered as the permutation matrix $(\delta_{i,\tau(j)})_{i,j}\in SO(n)$. And also consider a smooth function $w:S^3\times[-L,L]\to [0,1]$ depending only on the first variable:
	\begin{equation*}
		w(y,t)=\begin{cases}
			0, &\text{if $t\in[-L,-1]$},\\
			1, &\text{if $t\in[1,L]$}.
		\end{cases}
	\end{equation*}
	
	For a section $e$ of a bundle $E$(different from the characteristic $O(2)$-bundle) over $X$, denote by $e_i$ its restriction to the bundle $E_i=E_{|X_i}$. Suppose the restrictions of $E_i$ to the long neck is identified with a bundle $F$. Then $E_{|X_i}$ glue together to form a bundle $E^{\tau}$ over $X^{\tau}$. Any smooth section $e$ patches together to give a smooth section $e^{\tau}$ of $E^{\tau}$: The restrictions $e_i$ to $X_i\setminus N(L)$ remain unchanged. Over the long neck, the restrictions of $e_i$ can be viewed as the components of a section 
	$$\vec{e}=(e_1,\cdots,e_n)$$
	of the bundle $\oplus_{i=1}^n F$ over $S^3\times[-L,L]$. The $i$-th component of the section
	$$\vec{e^{\tau}}=(\psi\circ w)\cdot\vec{e}$$
	now restricts to the section $e_i$ over $[-L,-1]$ and to the section $e_{\tau(i)}$ over $[1,L]$. Patching together, we obtain a smooth section $e^{\tau}$ over $E^{\tau}.$
	
	Applying the gluing construction to forms $\alpha$ and spinors $\phi$ on $X$ defines a linear map $V$
	$$V:(\alpha,\phi)\mapsto(\psi\circ w)\cdot(\alpha,\phi)=(\alpha^{\tau},\phi^{\tau}).$$
	
	Combining all of these constructions, we obtain isomorphisms $\A\to \A^{\tau}$ and $\mathcal{C}\to \mathcal{C}^{\tau}$. Denote all of the isomorphisms by $V$.
	
	\begin{theorem}(The isomorphism theorem)\label{gluing isomorphism}
		Gluing via $V$ induces for $b^+=b_2^+(X;l)=b_2^+(X^{\tau},l^{\tau})$ an isomorphism 
		\begin{align*}
		    \pi^{b^+}_{\Z_2,H}(*;ind(D)) &\to \pi^{b^+}_{\Z_2,H^{\tau}}(*;ind(D^{\tau}))\\
		    [\mu_{X_1}]\wedge (\wedge_{i=2}^n \Res^{S^1}_{Z_2}[\mu_{X_i}]) &\mapsto [\mu_{X_1^{\tau}}]\wedge (\wedge_{i=2}^n \Res^{S^1}_{Z_2}[\mu_{X_i^{\tau}}])
		\end{align*}
		which identifies the class of the monopole maps of $X$ and $X^{\tau}$.
	\end{theorem}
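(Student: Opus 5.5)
The plan is to follow Bauer's proof of the isomorphism theorem in \cite{bauer2004stable}. The statement splits into two claims. First, $V$ induces an isomorphism of cohomotopy groups. Second, the conjugated map $V^{-1}\circ\mu_{X^{\tau}}\circ V$ is homotopic to $\mu_X$ through admissible maps, meaning maps of the form linear Fredholm plus compact that satisfy the boundedness property of Proposition \ref{boundedness}.

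\textbf{The isomorphism of groups.} Since $\psi(t)\in SO(n)$ and $w$ depends only on the neck coordinate, $V$ is a $\Z_2$-equivariant linear isomorphism $\A\to\A^{\tau}$ and $\mathcal{C}\to\mathcal{C}^{\tau}$. It commutes with the diagonal $\Z_2$ action, because the $\Z_2$ acts by $\pm1$ on all spinor components simultaneously and $\psi$ acts on the component index.

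The linear parts $l=(D_A,d^+)$ and $V^{-1}l^{\tau}V$ differ only by terms involving $dw$ over the short neck $S^3\times[-1,1]$. These terms are zeroth-order and hence compact. Therefore the two linear parts are joined by a path of Fredholm operators, namely $l+s(V^{-1}l^{\tau}V-l)$, and this path identifies the equivariant index bundles $\ind(D)\cong\ind(D^{\tau})$ as virtual $\Z_2$-representations. It also identifies $H$ with $H^{\tau}$ and gives $b^+(X;l)=b^+(X^{\tau};l^{\tau})$. Since $*$ is a point, Theorem \ref{critical theorem} then gives the isomorphism of the groups.

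For the disconnected manifold, the monopole map is a product of the component maps. Its class is therefore the smash product $[\mu_{X_1}]\wedge\bigwedge_{i\ge2}\Res^{S^1}_{\Z_2}[\mu_{X_i}]$, because the components $i\ge2$ carry untwisted structures with $S^1$-symmetry restricted to the diagonal $\Z_2$. The same holds for $X^{\tau}$.

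\textbf{Homotopy between the monopole maps.} I would use the straight-line homotopy
$$\mu_s=(1-s)\mu_X+s\,V^{-1}\mu_{X^{\tau}}V,\qquad s\in[0,1].$$
The nonlinear parts are pointwise local, and $\psi\circ w$ is orthogonal. Hence the quadratic term $q$ and the Clifford term $a\cdot\phi$ transform exactly under $V$ away from the short neck, up to the cross terms mixing components, which are still compact. The connection $A$ is flat on $E$ over the neck and identical on every component there, so $F_A^+$ contributes nothing on the neck. It follows that each $\mu_s$ decomposes as Fredholm plus compact, and Theorem \ref{critical theorem} applies to the family over $[0,1]$ once properness is established.

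\textbf{Main obstacle: uniform boundedness.} The hardest step is proving that preimages of bounded sets under $\mu_s$ are bounded uniformly in $s$. The interpolated map does not come from a genuine Weitzenböck identity over the short neck, so the maximum-principle argument of Proposition \ref{boundedness} fails there. I would handle this by using the long neck, $2L>8$.

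First, on the region where $w$ is locally constant, $\mu_s$ coincides with an honest monopole map on the relevant piece. There the $L^\infty$ estimate for $\phi$ goes through at any interior maximum, or at a maximum on the short neck, after controlling the error.

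Second, on the short neck the error terms are bounded by $|dw|\cdot|\vec e|$, and $|dw|$ can be made small by spreading $w$ over a long interval. Concretely, I would rechoose $w$ to vary over $[-L/2,L/2]$, making $|dw|=O(1/L)$, which is small compared with the quartic term $\tfrac12|\phi|^4$. The polynomial estimate then persists with constants independent of $s$.

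Finally, the bootstrapping step is unchanged, since it uses only elliptic estimates and Sobolev multiplication. If the $1/L$ smallness does not suffice, my fallback is Bauer's alternative: perform the gluing only on spinors and forms orthogonal to the harmonic pieces and use exponential decay along the neck.
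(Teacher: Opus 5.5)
\textbf{There is a genuine gap in your boundedness step.} You claim that on the short neck the discrepancy between $\mu_X$ and $V^{-1}\mu_{X^{\tau}}V$ is bounded by $|dw|\cdot|\vec e|$. That is true only for the linear part, where $V^{-1}l^{\tau}V-l$ is multiplication by $V^{-1}dV$. The quadratic term and Clifford multiplication act componentwise, $q(\vec\phi)=(q(\phi_1),\dots,q(\phi_n))$. So they commute with permutation matrices but not with general elements of $SO(n)$. Wherever $\psi(w)$ is not a permutation matrix, $V^{-1}q(V\phi)-q(\phi)$ has size $|\phi|^2$ and $V^{-1}\bigl((Va)\cdot(V\phi)\bigr)-a\cdot\phi$ has size $|a|\,|\phi|$, with no factor of $dw$. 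Spreading $w$ over $[-L/2,L/2]$ makes $dw$ small, but it enlarges exactly the region where this mixing happens.

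For $0<s<1$, $\mu_s$ is therefore not a monopole map for any connection. Its Dirac component is $D_{A+a}$ plus $s$ times Clifford multiplication by an $\mathrm{End}(\R^n)$-valued $1$-form that mixes the $n$ components. The curvature of that connection is not what the second component of $\mu_s$ prescribes. Consequently the Weitzenb\"ock identity gives no signed $\tfrac12|\phi|^4$ term. Since $|a|$ is only controlled by $|\phi|^2$, the error terms are at least of the same order in $|\phi|$ as the good term, and there is no small parameter. Uniform boundedness of $\mu_s^{-1}(0)$, the essential content here, is not established, and your fallback is not a concrete argument.

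\textbf{The missing idea} is to remove the nonlinearity from the region where $V$ mixes components \emph{before} conjugating, so that conjugation only produces the zeroth-order term you correctly identified. This is Bauer's argument, which the paper follows via three homotopies with uniform bounds.

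\emph{First homotopy.} Cut off the quadratic term on the long neck, $F^+_{A+a}-\rho_{L,t}q(\phi)$. Positive scalar curvature on $S^3\times[-L,L]$ keeps the maximum of $|\phi|^2$ off the neck, so $|\phi|^2\le S$ persists.

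\emph{Second homotopy.} Cut off the Clifford term on the short neck, using $D_{A+\rho_{2,t}a}\phi$. Now $a$ is harmonic on $N(L-1)$. The maximum principle and the $L$-independent estimate of Proposition \ref{phi} bound $\|a\|_{C^0}$ by $U$. The $S^3$-component $a_s$ decays exponentially toward the middle of the neck, by positive Ricci curvature of $S^3$ and a spectral gap. Hence the new Weitzenb\"ock term $\langle (d\rho_{2,t}\wedge a_s)^+\phi,\phi\rangle$ is exponentially small in $L$ and loses to the scalar curvature. The bounds are closed by showing that zeros with $\|\phi\|^2_{C^0}\le 2S$ and $\|a\|_{C^0}\le 2U$ actually satisfy $\|\phi\|^2_{C^0}\le S$ and $\|a\|_{C^0}\le U$, so no zero reaches the bounding sphere.

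\emph{Third homotopy.} The resulting map $P$ is linear on $N(1)$, so $V^{-1}P^{\tau}V=P+V^{-1}dV$ there, and the family $P+d\log V_t$ connects them.

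Conjugating the first two homotopies for $X^{\tau}$ by $V$ then gives $\mu\simeq P\simeq V^{-1}P^{\tau}V\simeq V^{-1}\mu^{\tau}V$. Your treatment of the linear parts is exactly the third step. It just cannot be applied until the first two steps have made the map linear on the short neck.
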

	
	We will give the proof of this theorem in section \ref{proof of the isomorphism theorem}.
	
	\begin{proposition}
		The monopole map $\mu$ on $X=\bigsqcup_{i=1}^n X_i$ is the product of the monopole maps on components of $X$
 		$$\mu={\prod\limits_{i=1}^{n}} \mu_i:\A={\prod\limits_{i=1}^{n}} \A_i\to{\prod\limits_{i=1}^{n}} \mathcal{C}_i=\mathcal{C}.$$
		The associated stable equivariant cohomotopy element is the smash product
		$$[\mu]=\wedge_{i=1}^n[\mu_i]\in\pi^{b^+}_{G_i^n,\oplus H_1}(*;\ind(D)).$$
		$G_i^n$\textnormal{(=$\Z_2$ or $S^1$)} acts on $\oplus H_i$ factorwisely.
	\end{proposition}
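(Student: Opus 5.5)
The plan is to check that every ingredient of the monopole map on a disjoint union splits as a product over the components. After that, we invoke the general fact that the stable class of a product of maps is the smash product of the classes.

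First, the configuration spaces. Since $X=\bigsqcup_i X_i$, every function space decomposes as a direct sum over components: $\Gamma(S^\pm)=\bigoplus_i\Gamma(S^\pm_i)$, $\ker d^*=\bigoplus_i\ker d^*_i$, and $\Omega^+(X;\ii\lambda)=\bigoplus_i\Omega^+(X_i;\ii\lambda_i)$. The same holds for their $L^2_k$ and $L^2_{k-1}$ completions, because Sobolev norms are integrals over $X$ and therefore sums over components. The operators $D_{A+a}$, $d^+$, $F^+$ and the quadratic map $q$ are all local, so $\mu(\phi,a)=(\mu_1(\phi_1,a_1),\dots,\mu_n(\phi_n,a_n))$. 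This gives the first claim.

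Second, the symmetry group. The group $G=\Z_2\times(\times_{i=2}^n S^1)$ acts factorwise, with the $i$-th factor acting only on $(\A_i,\mathcal C_i)$. Each $\mu_i$ is $G_i$-equivariant, by $\Z_2$ for the twisted component $X_1$ and by $S^1$ for the untwisted ones, as established in Section \ref{The monopole map and the stable cohomotopy invariant} and in the usual Bauer--Furuta setting. Hence the product map is $G$-equivariant.

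Third, the stable cohomotopy classes. I would use finite dimensional approximations as in Theorem \ref{critical theorem}. For each $i$, choose a $G_i$-invariant finite dimensional subspace $W_i\subset\mathcal C_i$, large enough that $W_i+\operatorname{im} l_i=\mathcal C_i$ and large enough that the approximation is valid on the bounded disks given by Proposition \ref{boundedness}. Set $V_i=l_i^{-1}(W_i)$. Then $f_i=\operatorname{pr}_{W_i}\circ\mu_i|_{V_i}\colon V_i^+\to W_i^+$ represents $[\mu_i]$. Now take $W=\bigoplus W_i$, which gives $V=l^{-1}(W)=\bigoplus V_i$ since $l$ is diagonal. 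The product $\prod f_i\colon V^+=\bigwedge V_i^+\to\bigwedge W_i^+=W^+$ is, by the first two steps, a valid finite dimensional approximation of $\mu$. In particular, the index bundle splits as $\ind(D)=\bigoplus_i\ind(D_i)$ and $H=\bigoplus H_i$. By definition, the map induced on one-point compactifications of a product is the smash product. Independence of the choices, also part of Theorem \ref{critical theorem}, then gives $[\mu]=\wedge_i[\mu_i]$.

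The main obstacle is the approximation step. One must check that a product of individually valid approximations is a valid approximation for the product map. The approximation criterion requires that the compact part $c$ stays uniformly close to $W$ on the relevant bounded set, and that no zeros are created on the boundary sphere. For a product, a point of $V$ can be large in one component and small in another. I would handle this with the product of bounded disks, using the sup-norm over components as the norm on $V$. A point outside this product lies outside the disk in some component $i$, where $f_i\neq0$ by the choice of $W_i$, so $\prod f_i$ is nonzero there. Each homotopy used to compare approximations can be run componentwise, so the smash decomposition persists in the stable class. The final bookkeeping is to restrict the $S^1$ actions to the diagonal $\Z_2$, which is what later yields the $\Res^{S^1}_{\Z_2}$ factors.
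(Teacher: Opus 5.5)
Your argument is correct and follows essentially the approach the paper has in mind: the paper states this proposition without proof, as in Bauer's original, and takes the componentwise splitting of $\mathcal{A}$, $\mathcal{C}$, $l$, $c$ and of the group $\Z_2\times(\times_{i=2}^n S^1)$ to give $[\mu]=\wedge_i[\mu_i]$ immediately, and your product finite dimensional approximation with sup-norm product disks is the standard way to make this precise. One small point to tighten: you only need $\prod_i f_i\neq 0$ on the boundary of $\prod_i D_i$, not on its whole exterior, because every boundary point has some component on $\partial D_i$, where $f_i\neq 0$ by the choice of $W_i$, whereas $\operatorname{pr}_{W_i}\circ\mu_i$ is not controlled far outside $D_i$.
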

	
	\begin{proposition}
		On $S^4$, the restriction map $Res^{S^1}_{Z_2}: \pi^0_{S^1}(*)\to \pi^0_{\Z_2}(*)$, maps the stable cohomotopy element 
		$[\mu]=[id]\in\pi^0_{\Z_2,H}(*)\cong\Z$ associated to the standard $Spin^c$-structure, represented by $f:(\R^m\oplus\C^d)^+\to(\R^m\oplus\C^d)^+$, to the element $
		\bar{f}:(\R^m\oplus\tilde{\R}^{2d})^+\to (\R^m\oplus\tilde{\R}^{2d})^+$ representing $[\bar{f}]\in\pi^0_{\Z_2,H}(*)\cong A(Z_2)\cong \Z[Z_2]\oplus\Z[Z_2/{Z_2}]$, where $[\bar{f}]$ corresponds to $deg(\bar{f}^{Z_2})[Z_2/{Z_2}])$.
		\begin{proof}
			Proposition $2.3$ in \cite{bauer2004stable} gives the proof that the isomorphism $\pi^0_{\Z_2,H}(*)\cong\Z$ is induced by restriction to fixed point sets and $[\mu]$ is the identity. According to \cite{segal1970equivariant}, we have the isomorphisms $\pi^0_{\Z_2,H}(*)\cong A(Z_2)\cong \Z[Z_2]\oplus\Z[Z_2/{Z_2}]$ where $A(\Z_2)$ is the Burnside ring. The restriction map $\Res^{S^1}_{Z_2}$ maps the identity in $\pi^0_{S^1}(*)$ to the identity in $\pi^0_{\Z_2}(*)$ by restricting to the $Z_2$ fixed part.  
		\end{proof}
	\end{proposition}
	
	\subsection{Proof of Theorem \ref{main theorem2}}
	
	\begin{proof}
		Let $n=3$ and let $X=X_1\sqcup X_2\sqcup X_3$, $X_1=X_1\# S^4, X_2=S^4\# X_2, X_3=S^4\# S^4$, and let $\tau(123)=(312)$. Then after applying $\tau$ to the components of $X$, we obtain $X^{\tau}$ which is the disjoint union of $X_1\# X_2$ and two copies of $S^4$. By Theorem \ref{gluing isomorphism}, $[\mu_{X_1}]\wedge \Res^{S^1}_{Z_2}[\mu_{X_2}]=[\mu_{X_1}]\wedge \Res^{S^1}_{Z_2}[\mu_{X_2}]\wedge[id]=[\mu_{X_1\# X_2}]\wedge[id]\wedge[id]=[\mu_{X_1\# X_2}]$.
	\end{proof}

    \subsection{Proof of Theorem \ref{gluing isomorphism}}\label{proof of the isomorphism theorem}
    
    Let $\mu$ and $\mu^{\tau}$ denote the monopole maps on $X$ and $X^{\tau}$ respectively. We will prove the follwing diagram commutes up to suitable homotopy which we will explain later.
 
     \begin{figure}[h]
     	\centering
          \[
          \begin{tikzcd}\label{}
          	\A \arrow[r, "\mu"] \arrow[d, "V"'] & \mathcal{C} \arrow[d, "V"] \\
          	\A^{\tau} \arrow[r, "\mu^{\tau}"] & \mathcal{C}^{\tau}
          \end{tikzcd}
          \]
     	\caption{homotopy commutative diagram}
     	\label{homotopy commutative diagram}
     \end{figure}

	Consider homotopies of Fredholm maps 
	$$\mu_t=l_t+c_t:\A\to \mathcal{C},$$
	starting from $\mu_0=\mu$ and ending at $\mu_1=V^{-1}\mu^{\tau}V$ with the following property: there exists a bounded disk $D\subset\A$ with bounding sphere $S$ such that:
	$$\mu_t^{-1}(0)\subset D\quad \text{for $t\in\{0,1\}$},$$
	and
	$$\mu_t^{-1}(0)\cap S=\emptyset\quad \text{for $t\in[0,1]$}.$$

    \subsubsection{The standard estimates}
    
    All estimates in our proof will be variations of the standard estimates used in the proof of compactness. Consider the point $(\phi,a)$ with $\mu(\phi,a)=0$. Let us recall the three main steps:
    
    Step1. Applying the Weitzenb$\ddot{o}$ck formula to $D_{A'}=D_{A+a'}$ to get a pointwise estimate:
    
    \begin{align*}
    	\Delta(|\phi|^2) &\leq 2\langle D_{A'}^*D_{A'}\phi-\frac{\kappa}{4}\phi-\frac{F_{A'}^+}{2}\phi,\phi\rangle\\
    	&=2\langle -\frac{\kappa}{4}\phi-\frac{1}{4}|\phi|^2\phi,\phi\rangle\\
    	&=\langle-\frac{\kappa}{2}\phi-\frac{1}{2}|\phi|^2\phi,\phi\rangle\\
    	&=-\frac{\kappa}{2}|\phi|^2-\frac{1}{2}|\phi|^4.
    \end{align*}
     
    Evaluate $|\phi(x)|^2$ at a maximum point $x_0$ so that $\Delta(|\phi|^2)\geq 0$, and we obtain an estimate $\kappa(x_0)|\phi(x_0)|^2+|\phi(x_0)|^4\leq 0$, which implies either $\phi\equiv 0$ or $|\phi|^2\leq S=\max\limits_{x\in X}(-\kappa(x),0)$.
    
    Step2. Using the Sobolev embedding $L_1^p\hookrightarrow C^0$ for some $p>4$, we obtain an estimate $\|a\|_{C^0}\leq C_1\|a\|_{L_1^p}$. By elliptic estimate on the operator $d^++d^*$, we have $\|a\|_{L_1^p}\leq C_2(\|d^+a\|_{L^p}+\|d^*a\|_{L^p})$. Combine the two estimates with the equality $d^+a=-F_A^++q(\phi)$ to get 
    $$\|a\|_{C^0}\leq C_1C_2(\|F_A^+\|_{L^p}+\|q(\phi)\|_{L^p}).$$
    
    Step3.(Bootstrapping) Consider the operator $D_A+d^+$ and  assume $L_{i-1}^{2p}$-bounds on $(\phi,a)$ inductively for $i\leq k$ and $p=2^{k-i}$. We have $$\|(\phi,a)\|^p_{L_i^p}-\|(\phi,a)\|^p_{L^p}= \|(D_A\phi,d^+a)\|^p_{L_{i-1}^p}=\|(-a\phi,-F_A^++q(\phi))\|^p_{L_{i-1}^p}.$$
    The last summand is bounded by the assumed $L_{i-1}^{2p}$-bounds on $(\phi,a)$.  
    
    \subsubsection{Varying the length of the long neck}
    
    Let $X'=X \setminus N(L-1)$.
    
    \begin{proposition} \label{phi}
    	There exists a constant $C_3$ independent of $L$ such that for any smooth partition of unity $\psi_+,\psi_-$ on $X$ satisfying 
    	\begin{equation*}
    		\psi_+=\begin{cases}
    			      1 &\text{on $X^+\cap X'$},\\
    			      0 &\text{on $X^-\cap X'$}.
    		       \end{cases}
    	\end{equation*}
    	the following elliptic estimate holds:
    	$$\|a\|_{C^0(X')}\leq C_3(\|(d^*+d^+)\psi_{+}a\|_{L^p}+\|(d^*+d^+)\psi_{-}a\|_{L^p}).$$
    	\begin{proof}
    		%We first show $C_1$ is independent of $L$. Choose for any $x\in X$ a bump function $\beta_x:X\to [0,1]$ supported near $x$ and $\beta_x(x)=1$. $\beta_x$ can be chosen such that $||\beta_x||_{C^1(X)}\leq M$ where $M$ is independent of $L$. Let $c$ be any Sobolev constant for a particular $L$. Then the inequality
    		%\begin{equation}
    			%||a||_{C^0(X)}=\max\limits_{x\in X}|\beta_xa|\leq c\max\limits_{x\in X}||\beta_xa||_{L_1^p}\leq Mc||a||_{L_1^p}. \label{Sobolev embedding estimate}
    		%\end{equation}
    		%shows $C_1=Mc$ is independent of $L$.
    		
    		%Now we consider the constants in the elliptic estimate. 
    		We will work with manifolds with tubular ends: Add $n$ tubes $S^3\times [0,+\infty)$ to $X^-$, and $n$ tubes $S^3\times (-\infty,0]$ to $X^+$ respectively to obtain $Y^{\pm}$(compare \cite{nakamura2015pin} section 5). 
    		
    		Let $f^-_{\alpha}\geq 0$ be a smooth function on $Y^-$ such that 
    		\begin{equation*}
    			f^-_{\alpha}(y)=
    			        \begin{cases}
    				        0 &\text{if $y\notin\bigsqcup_{i=1}^n S^3\times(-L+2,\infty)$,}\\
    				        \alpha(r+L-2) &\text{if $y=(r,s)\in S^3\times[-L+3,\infty)$.}   
    			        \end{cases}
    		\end{equation*}
    		And similarly for $f^+_{\alpha}\geq 0$ on $Y^+$:
    		\begin{equation*}
    			f^+_{\alpha}(y)=
    			\begin{cases}
    				0 &\text{if $y\notin\bigsqcup_{i=1}^n S^3\times(-\infty,L-2)$,}\\
    				-\alpha(r-L+2) &\text{if $y=(r,s)\in S^3\times(-\infty,L-3]$.}   
    			\end{cases}
    		\end{equation*}
    		Define the weighted Sobolev spaces $L_k^{p,\alpha}(Y^{\pm})$ on sections of bundles over $Y^{\pm}$ using the norm 
    		$$\|h\|_{L_k^{p,\alpha}(Y^{\pm})}:=\|\exp(f_{\alpha}^{\pm})h\|_{L_k^p(Y^{\pm})}.$$
    		
    		Denote the pull-back bundle $\pi^*(\Lambda^k T^*Z)\otimes\lambda\to Z\times\R$ by $\Lambda^k_Z$, for $Z$ a closed oriented $3$-manifold and $\pi:Z\times\R\to Z$ the natural projection to the first factor. By the identification
    		$$\Omega^1(S^3\times\R)\cong \Gamma(\Lambda_{S^3}^0\oplus \Lambda_{S^3}^1)\cong \Omega^0(S^3\times \R)\oplus \Omega^+(S^3\times \R), $$
    		we can regard $d^*+d^+$ as $\frac{\partial}{\partial t}+L$, where $L= \begin{pmatrix}
    			0 & d^* \\
    			d & *d
    		\end{pmatrix}$
    		 is a self adjoint elliptic operator on $S^3$, satisfying $L^2=\Delta$. According to \cite{donaldson2002floer}, after appropriate completion, the operator $\frac{\partial}{\partial t}+L$ induces a Fredholm map $\frac{\partial}{\partial t}+L:L_1^{p,\alpha}((\Omega^0_{Y^{\pm}}\oplus\Omega^{2,+}_{Y^{\pm}})(\lambda))\to L^{p,\alpha}((\Omega^1_{Y^{\pm}})(\lambda))$ if $\alpha\notin Spec(L).$ Suppose $\alpha<0$ and $\alpha$ is greater than the maximal negative eigenvalue of $L$, using the fact \cite{atiyah1975spectral} that the $L^2$ harmonic forms $\mathcal{H}(Y^{\pm})$ is isomorphic to the image $\hat{H}(X^{\pm})$ of $H^*_{cpt}(Y^{\pm})\to H^*(Y^{\pm})$(or equivalently $H^*(Y^{\pm},S^3)\to H^*(Y^{\pm})$), then $\ker(\frac{\partial}{\partial t}+L)\cong H^1(Y^{\pm};\lambda^{\pm})$, $coker (\frac{\partial}{\partial t}+L)\cong H^0(Y^{\pm};\lambda^{\pm})\oplus H^+(Y^{\pm};\lambda^{\pm})$. Choose a $1$-form $a\in\Omega^1(X;\lambda)$, then we can form compactly supported forms $b^{\pm}=\psi_{\pm}a$ on $Y^{\pm}$ and we have the following elliptic estimate \cite{lockhart1985elliptic}
    		 $$\|b^{\pm}\|_{L_1^{p,\alpha}(Y^{\pm})}\leq C^{\pm}(\|(d^*+d^+)b^{\pm}\|_{L^{p,\alpha}(Y^{\pm})}+\|pr(b)\|).$$ Since by assumption $H^1(Y^{\pm};\lambda)=0$, the estimate simplifies to 
    		 $$\|b^{\pm}\|_{L_1^{p,\alpha}(Y^{\pm})}\leq C^{\pm}\|(d^*+d^+)b^{\pm}\|_{L^{p,\alpha}(Y^{\pm})}.$$
    		 
    		 Choose for any $x\in X$ a bump function $\beta_x:X\to [0,1]$ supported near $x$ and $\beta_x(x)=1$. $\beta_x$ can be chosen such that $\|\beta_x\|_{C^1(X)}\leq M$ where $M$ is independent of $L$. Then $\|a\|_{C^0(X)}=\max\limits_{x\in X}|\beta_x a|$.
    		 
    		 \begin{claim}
    		 	For $Y_i^{+}=X_i^{+}\cup([-\infty,0]$, let $\theta\in\Omega^1(Y_i^+;\lambda)$ with $supp(\theta)\subseteq Y_i^+\setminus (-\infty,1-L)$, and for simplicity we denote $Y_i^+$ by $Y^+$. Then there exists a constant $C_+$ which is independent of $L$ such that $\|\theta\|_{C^0(Y^{+})}\leq C_+\|\theta\|_{L^p_1(Y^+)}$.
    		 	\begin{proof}
    		 		For $x\in Y^+\setminus (-\infty,1-L)$, there exists a constant $c$ which is independent of $L$ such that $\|\beta_x\theta\|_{C^0(Y^{+})}\leq c\|\beta_x\theta\|_{L^p_1(Y^+)}$, which can be seen as follows:
    		 		\begin{enumerate}
    		 			\item If $x\in S^3\times [1-L,-1]$, the support of $\beta_x\theta$ is contained in $S^3\times[1-L,0]$. Suppose for convenience that $L$ is an integer, and since the metric on the neck is the product metric, we have by the Sobolev embedding $$\|\beta_x\theta\|_{C^0(S^3\times[-k-1,-k])}\leq C_s\|\beta_x\theta\|_{L^p_1(S^3\times[-k-1,-k])}
    		 		    \ \text{for any}\ 0\leq k\leq L-2,$$
    		 		    Combine all estimates on the neck to get  
    		 			\begin{align*}
    		 				\|\beta_x\theta\|_{C^0(S^3\times[1-L,0])}&\leq \sum_{k=0}^{L-2} \|\beta_x\theta\|_{C^0(S^3\times[-k-1,-k])}\\
    		 				&\leq \sum_{k=0}^{L-2} C_s\|\beta_x\theta\|_{L^p_1(S^3\times[-k-1,-k])}\\
    		 				&=C_s\|\beta_x\theta\|_{L^p_1(S^3\times[1-L,0])}.
    		 			\end{align*}
    		 			\item If $x\in Y^+\setminus (-\infty,-1)$, since $Y^+\setminus (-\infty,-1)$ is compact, there exists a constant $C_t$ such that $\|\beta_x\theta\|_{C^0(Y^+\setminus (-\infty,-1))}\leq C_t\|\beta_x\theta\|_{L^p_1(Y^+\setminus (-\infty,-1))}$.
      		 		\end{enumerate}
      		 		Take $c=max(C_s,C_t)$ and we have
      		 		$$\|\theta\|_{C^0(Y^+)}=\|\theta\|_{C^0(Y^+\setminus(-\infty,1-L))}=\max\limits_{x\in (Y^+\setminus(-\infty,1-L))}|\beta_x\theta|\leq \max\limits_{x\in (Y^+\setminus(-\infty,1-L))}c\|\beta_x\theta\|_{L^p_1(Y^+)}\leq cM\|\theta\|_{L^p_1(Y^+)}.$$
      		 		Take $C_+=cM$ as desired.
    		 	\end{proof}	 	
    		 \end{claim}
    		 
    		 We have similar results for $Y_i^-$ and we denote $\max(C_+,C_-)$ by $C$.	
    		 
    		 Now we are ready to prove the estimate:
    		 \begin{align*}
    		 	\|a\|_{C^0(X')}&\leq \|\exp(f_{\alpha}^+)b^+\|_{C^0(Y^+)}+\|\exp(f_{\alpha}^-)b^-\|_{C^0(Y^-)}\\
    		 	&\leq C(\|\exp(f_{\alpha}^+)b^+\|_{L^p_1(Y^+)}+\|\exp(f_{\alpha}^-)b^-\|_{L^p_1(Y^-)})\\
    		 	&=C(\|b^+\|_{L^{p,\alpha}_1(Y^+)}+\|b^-\|_{L^{p,\alpha}_1(Y^-)})\\
    		 	&\leq C(C^{+}\|(d^*+d^+)b^{+}\|_{L^{p,\alpha}(Y^{+})}+C^{-}\|(d^*+d^+)b^{-}\|_{L^{p,\alpha}(Y^{-})})\\
    		 	&\leq C(C^{+}\|(d^*+d^+)b^{+}\|_{L^{p}(Y^{+})}+C^{-}\|(d^*+d^+)b^{-}\|_{L^{p}(Y^{-})})\\
    		 	&\leq C(C^{+}\|(d^*+d^+)b^{+}\|_{L^{p}(X\setminus(X'\cap X^-))}+C^{-}\|(d^*+d^+)b^{-}|\|_{L^{p}(X\setminus(X'\cap X^+))})\\
    		 	&\leq C(C^{+}\|(d^*+d^+)b^{+}\|_{L^{p}(X)}+C^{-}\|(d^*+d^+)b^{-}\|_{L^{p}(X)}).
    		 \end{align*}
    		 Take $C_3=\max(CC^+,CC^-)$ to obtain the desired estimate.
    		 %using the fact that $|a(x)|=|b^{\pm}(x)|$ for any $x\in X'$, combined with Sobolev inequality \ref{Sobolev embedding estimate}, we obtain the desired estimate with constant $C_3=C_1\max(C^{\pm})$.     		
    	\end{proof}	
    \end{proposition}
    
    \subsubsection{The first homotopy}
    
    For $R\leq L$, let $\rho_{R}:X\to [0,1]$ and $\rho_{R}^{\tau}\to [0,1]$ be smooth cutoff functions such that: 
    \begin{itemize}
    	\item $\rho_{R}=\rho_{R}^{\tau}$ on the components $X_i^{\pm}$,
    	\item $\rho_{R}=\rho_{R}^{\tau}=0$ On $N(R-1)$,
    	\item $\rho_{R}=\rho_{R}^{\tau}=1$ for $x\notin N(R)$,
    	\item $\rho_{R}$ and $\rho_{R}^{\tau}$ are constants on $S^3\times\{r\}$ on $N(R)\setminus N(R-1)$. 
    \end{itemize}
    The homotopy 
    $$\rho_{R,t}:=(1-t)+t\rho_R, t\in[0,1]$$
    describes a homotopy from constant map $1$ to $\rho_R$ on X. Similarly, we can define the homotopy $\rho_{R,t}^{\tau}$.
    
    Consider the homotopy $\mu_t:\A \to \mathcal{C}$ defined by
    $$\mu_t(\phi,a)=(D_{A+a}\phi,F_{A+a}^+-\rho_{L,t}q(\phi)).$$
    \begin{lemma}
    	$\mu_t^{-1}(0)$ is uniformly bounded for all $t\in [0,1]$.
    	\begin{proof}
    		At the maximum point $x_0$ of $|\phi|^2$, by the estimate in step1, we have the estimate
    		$$\kappa|\phi|^2+\rho_{L,t}|\phi|^4\leq 0.$$
    		Since $\kappa>0$ over the neck, the maximum point $x_0$ is not on the long neck and the maximum satisfies the estimate $|\phi(x_0)|^2\leq S=\max\limits_{x\in X}(-\kappa,0)$. The norm of $\rho_{L,t}q(\phi)$ is bounded by a multiple of the norm of $q(\phi)$. These bounds are independent of $t$.
    	\end{proof}
    \end{lemma}
    
    \subsubsection{The second homotopy}
    The homotopy $$\mu_{t+1}(\phi,a)=(D_{A+\rho_{2,t}a}\phi,F_{A+a}^+-\rho_L q(\phi))$$
    starts at $\mu_1$ and ends at $P$, where $P$ is linear on the short neck $N(1)$.

    \begin{lemma}\label{lemma1 of second homotopy}
    	There exist constants $U$ and $L_0$ such that for any $(\phi,a)$ which is a zero of $\mu_1$ on $X(L)$ with $L\geq L_0$,we have the estimate $\|a\|_{C^0}\leq U$.
    	\begin{proof}
    		On $N(L-1)$, $0=F^+_{A+a}-\rho_Lq(\phi)=F^+_{A+a}$ implies $d^+a=0$ since we have assumed that $A$ is flat over the long neck. Combined with $a\in\ker d^*$, we can deduce that $a$ is harmonic on the long neck. Because $Ric(x)\geq 0$ for $x\in N(L-1)$, we have the maximum principle for $a$ over $N(L-1)$:
    	    $$\|a\|_{C^0(N(L-1))}\leq \|a\|_{C^0(X')}=\|a\|_{C^0(X)}.$$
    	      	   
    	    Let $\psi_{\pm}$ be the partition of unity as in proposition \ref{phi}, with $|d\psi_{\pm}|<L^{-1}$ over the long neck. From Proposition \ref{phi} we have the estimate
    		$$\|a\|_{C^0(X')}\leq C_3(\|(d^*+d^+)\psi_+a\|_{L^p}+\|(d^*+d^+)\psi_-a\|_{L^p}).$$
    		
    		We compute 
    		$$\|(d^*+d^+)\psi_+a\|_{L^p(X)}\leq \|\psi_{\pm}(d^*+d^+)a\|_{L^p(X)}+\|d\psi_{\pm}\|_{L^p(X)}\|a\|_{C^0(N(L-1))}.$$
    		
    		Combined with $(d^*+d^+)a=0$ on $N(L-1)$ and the maximum principle, we obtain
            \begin{align*}
            	\|a\|_{C^0(X)} &\leq C_3(\|(d^*+d^+)\psi_+a\|_{L^p(X)}+\|(d^*+d^+)\psi_-a\|_{L^p(X)})\\
            	&\leq C_3(\|\psi_{+}(d^*+d^+)a\|_{L^p(X)}+\|d\psi_{+}\|_{L^p(X)}\|a\|_{C^0(N(L-1))}\\
            	&+\|\psi_{-}(d^*+d^+)a\|_{L^p(X)}+\|d\psi_{-}\|_{L^p}\|a\|_{C^0(N(L-1))})\\
            	&\leq C_3(\|\psi_{+}\|_{C^0(X')}\|(d^*+d^+)a\|_{L^p(X^-\cap X')}+\|\psi_{-}\|_{C^0(X')}\|(d^*+d^+)a\|_{L^p(X^-\cap X')})\\
            	&+C_3(\|d\psi_+\|_{L^p(N(L-1))}+\|d\psi_-\|_{L^p(N(L-1))})\|a\|_{C^0(N(L-1))}\\
            	&\leq C_3\|(d^*+d^+)a\|_{L^p(X')}+C_3 \cdot 2(2(L-1)\operatorname{Vol}(S^3))^{\frac{1}{p}}\cdot L^{-1}\|a\|_{C^0(N(L-1))}\\
            	&\leq C_3\|q(\psi)-F_A^+\|_{C^0(X)}\operatorname{Vol}(X')^{\frac{1}{p}}+3C_3 L^{\frac{1}{p}-1}\operatorname{Vol}(S^3)^{\frac{1}{p}}\|a\|_{C^0(X)}\\
            	&\leq C_3(\|q(\psi)\|_{C^0(x)}+\|F_A^+\|_{C^0(X)})\operatorname{Vol}(X')^{\frac{1}{p}}+3C_3 L^{\frac{1}{p}-1}\operatorname{Vol}(S^3)^{\frac{1}{p}}\|a\|_{C^0(X)}\\
            	&\leq C_3(\frac{1}{2}S+\|F_A^+\|_{C^0})Vol(X')^{\frac{1}{p}}+3C_3 L^{\frac{1}{p}-1}\operatorname{Vol}(S^3)^{\frac{1}{p}}\|a\|_{C^0(X)}.
            \end{align*}
    		Take $U=2C_3(\frac{1}{2}S+\|F_A^+\|_{C^0})\operatorname{Vol}(X')^{\frac{1}{p}}$, and $L_0=(6C_3)^{\frac{p}{p-1}}\operatorname{Vol}(S^3)^{\frac{1}{p-1}}$ 
    		then the claim follows. 
    		%When varying the background connection $A$ in $A+\ker d_r$, the curvature $F_A^+$ remains unchanged. Therefore, we can choose $U$ on all fibers over $Pic^c(X)$ consistenly.
    	\end{proof}
    \end{lemma}
    
    \begin{lemma} \label{phi leq 2S, so alsp leq S}
    	If the length $L\geq L_1\geq L_0$, then the following holds: 
    	For any $(\phi,a)\in\mu_t^{-1}(0)$ for $1\leq t\leq 2$, if it satisfies $|\|\phi\|_{C^0}^2\leq 2S$ and $\|a\|_{C^0}\leq 2U$, then it satisfies the stricter estimates $\|\phi\|_{C^0}^2\leq S$ and $\|a\|_{C^0}\leq U$.
    	\begin{proof}
    		On $N(L-1)$, $a$ is harmonic and decomposes as $a=a_i+a_s$, where $a_i$ points in the $\R$ direction and $a_s$ points in the $S^3$ direction.
    		
    		For $a_s$, we have by the Bochner's formula
    		\begin{align*}
    			\Delta_X|a_s|^2&=-2\langle\Delta_{X,H}a_s,a_s\rangle+2|\nabla a_s|^2+2Ric_X(a_s,a_s)\\
    			&=2|\nabla a_s|^2+2Ric_X(a_s,a_s)\\
    			&\geq 2Ric_X(a_s,a_s)\\
    			&\geq \delta^2|a_s|^2,
    		\end{align*}
    		where the last inequality follows from the fact that the Ricci tensor is positive and bounded below by some constant $\delta^2, \delta>0$ in the direction of $S^3$.
    		
    		For $t\in [1-L,L-1]$, let $\alpha=\sum_{i=1}^{n}|a_{s,i}|^2, A(t)=\max\limits_{y\in S^3}\alpha(y,t)$ and choose for every $t$ a corresponding $y_t$ such that $\alpha(y_t,t)=A(t)$.The quantity $\alpha$ satisifes the following differential inequalities
    		$$\frac{d^2\alpha}{dt^2}=\Delta_X\alpha-\Delta_{S^3}\alpha\geq \delta^2\alpha-\Delta_{S^3}\alpha\geq \delta^2\alpha+\sigma^2\alpha=(\delta^2+\sigma^2)\alpha,$$
    		where the last inequality holds because $\Delta_{S^3}$ has spectral gap: $\Delta_{S^3} f\leq -\sigma^2 f$ for some $\sigma>0$ for all $f\in (\ker\Delta_{S^3})^{\perp}.$
    		
    		Apply this differential inequality to $A(t)$, we obtain
    		$$A''(t)\geq (\delta^2+\sigma^2)A(t).$$
    		
    		Let $z(t)$ solve the ODE: $z''(t)=(\delta^2+\sigma^2)z$ with boundary values $z(\pm(L-1))=4nU^2$, then the symmetric solution about $t=0$ is 
    		$$z(t)=\frac{4nU^2\cosh(\sqrt{\delta^2+\sigma^2}t)}{\cosh(\sqrt{\delta^2+\sigma^2}(L-1))}.$$
    		
    		Consider now the function $w(t):=A(t)-z(t)$, then it satisfies the differential inequality
    		$$w''-(\delta^2+\sigma^2)w=(A''-(\delta^2+\sigma^2)A)-(z''-(\delta^2+\sigma^2)z)\geq 0,$$
    		and has boundary condition $w(\pm(L-1))\leq 0$ since by assumption $||a||_{C^0}\leq 2U$.
    		
    		The maximum principle for the operator $L[w]:=w''-(\delta^2+\sigma^2)w$ implies $w\leq 0$ for $t\in[1-L,L-1]$, hence 
    		$$A(t)\leq z(t)=\frac{4nU^2\cosh(\sqrt{\delta^2+\sigma^2}t)}{\cosh(\sqrt{\delta^2+\sigma^2}(L-1))}.$$ 
    		In particular, $|a_s|^2\leq \frac{4nU^2\cosh(\sqrt{\delta^2+\sigma^2}t)}{\cosh(\sqrt{\delta^2+\sigma^2}(L-1))}$ and there is exponential decay of $|a_s|^2$ towards the middle of $N(L-1).$
    		
    		Now let us give the estimate for the spinor part $|\phi|^2$ and let $A'=A+\rho_{2,t}a$. If $|\phi|^2$ attains maximum inside $N(L-1)$, say, at the point $x_0\in N(L-1)$, we have by the Weitzenb$\ddot{o}$ck formula 
    		\begin{align*}
    			0\leq \Delta|\phi|^2(x_0)&\leq 2\langle D_{A'}^*D_{A'}\phi-\frac{\kappa}{4}\phi-\frac{1}{2}F_{A'}^+\phi,\phi\rangle\\
    			&=-\frac{\kappa}{2}|\phi|^2-\langle F_{A'}^+\phi,\phi\rangle\\
    			&=-\frac{\kappa}{2}|\phi|^2-\langle d^+(\rho_{2,t}a)\phi,\phi\rangle\\
    			&=-\frac{\kappa}{2}|\phi|^2-\langle (d\rho_{2,t}\wedge a)^+\phi+(\rho_{2,t}da)^+\phi,\phi\rangle\\
    			&=-\frac{\kappa}{2}|\phi|^2-\langle (d\rho_{2,t}\wedge a)^+\phi,\phi\rangle\\
    			&=-\frac{\kappa}{2}|\phi|^2-\langle (d\rho_{2,t}\wedge a_s)^+\phi,\phi\rangle,
    		\end{align*}
    		where the last equality follows from $d\rho_{2,t}\wedge a=d\rho_{2,t}\wedge a_s$ since $\rho_{2,t}$ is constant in the direction of $S^3.$ Outside $N(2)$, $d\rho_{2,t}=0$, so we only need to consider the case in which $x_0\in N(2)$. But by the exponential decay of $|a_s|$ with respect to $L$, if we stretch the length of the neck, the second summand in the last inequality will tend to zero and the scalar curvature term will prevail. Therefore, $|\phi|^2$ cannot attain its maximum in $N(L-1)$. 
    		
    		Outside $N(L-1)$, we have $\rho_{2,t}=1$, so $D_{A+a}\phi=0$ and $F_{A+a}^+=\rho_Lq(\phi).$ For $A'=A+a$, we have by the Weitzenb$\ddot{a}$ck formula 
    		\begin{align*}
    			\Delta|\phi|^2&\leq 2\langle D_{A'}^*D_{A'}\phi-\frac{\kappa}{4}\phi-\frac{1}{2}F_{A'}^+\phi,\phi\rangle\\
    			&=-\frac{\kappa}{2}|\phi|^2-\frac{1}{2}\rho_L|\phi|^4.
    		\end{align*}  
    		The laplacian of $|\phi|^2$ is nonnegative at the maximum point $x_0$, so by the last inequality, $\kappa|\phi|^2+\rho_L|\phi|^4\leq 0$, and if $|\phi|(x_0)\neq 0$, we have $\kappa+\rho_L|\phi|^2\leq 0$.
    		\begin{enumerate}
    			\item If $x_0\in N(L)\setminus N(L-1)$, the scalar curvature $\kappa$ is positive, which is a contradiction.
    			\item If $x_0$ is outside $N(L)$, we would have $\rho_L=1$. In this case, we obtain $|\phi|^2\leq S=\max\limits_{x\in X}(0,-\kappa)$, as desired.
    		\end{enumerate} 		
    		
    		Now for the estimate on the norm of the form part $|a|$, since $a$ is harmonic on $N(L-1)$, we have $\|a|\|_{C^0(X)}=\|a\|_{C^0(X')}$ by the maximum principle, which implies $|a|$ would not attain its maximum inside $N(L-1)$. Outside $N(L-1)$, $a$ satisfies the same equations as in lemma \ref{lemma1 of second homotopy}, and we can use the same argument to obtain the desired estimate. 
    		%Note $a$ is harmonic on $N(L-1)$ and it decomposes as $a=a_i+a_s$. For $a_s$ in the direction of $S^3$, we have the following inequality on Laplacian
    		%$$\Delta|a_s|^2\leq -2<Ric(a_s),a_s>.$$
    		%Since $Ric$ is positive definite on $S^3$,
    		%$$-2<Ric(a_s),a_s>\leq -\delta^2|a_s|^2$$
    		%for some $\delta>0$. Let $\alpha$ denote the sum $\sum_{i=1}^{n}|a_{s,i}|^2$, then $\alpha$ satisfies
    		%$$\frac{d^2\alpha}{dr^2}\geq -\Delta_X\alpha\geq \delta^2\alpha,$$
    		%which implies $|a_s|^2\leq \frac{4nU^2\cosh(\delta r)}{\cosh(\delta(L-1))}$ for $1-L\leq r\leq N-1.$ Moreover, $|a_s|$ decays exponentially towards the middle of the neck.
    		
    		%For the estimate of the spinor, if $|\phi|^2$ attains the maximum on some point $x_0\notin N(L-1)$, we can apply step1. Otherwise if the maximum is obtained at a point in $N(L-1)$,it will satisfy
    		%$$0\leq \Delta |\phi|^2\leq -\frac{\kappa}{2}|\phi|^2+<(d\rho_{2,t}\wedge a)^+\phi,\phi>=-\frac{\kappa}{2}|\phi|^2.$$
    		%Since  $\rho_{2,t}$ is constant on $S^3$ direction, $d\rho_{2,t}\wedge a=d\rho_{2,t}\wedge a_s$. Therefore the norm of $<(d\rho_{2,t}\wedge a)^+\phi,\phi>$ decays exponentially with respect to $L$. If we make $L$ so large then the inequality will not hold.
    		%But since $\kappa>0$ on $N(L-1)$, it will be a contradiction.
    		
    		%We can use the same argument as in Lemma 4.2 to get the sharper bound on $||a||$: the bound on $|\phi|^2$ is unchanged since $a$ is harmonic along $N(L-1)$.
    	\end{proof}
    \end{lemma}

    \subsubsection{The third homotopy}
    
    Lastly, we will construct a homotopy between $P$ and $V^{-1}P^{\tau}V$.
    Outside $N(1)$, $P=V^{-1}P^{\tau}V$ since $w$ is constant outside $N(1)$. On $N(1)$, $P(\phi,a)=(D_A\phi,F_{A+a}^+)=(D_A\phi,d^+a)$ so $P$ is a linear differential operator. Both the operators $D_A$ and $d^+$ take the form $\frac{d}{dt}+L$ for a self adjoint operator $L$ on $S^3$ and $V$ commutes with $L$ since $V$ depends only on $t$, and $L$ depends only on the $S^3$ parameters. For $u$ either a spinor or a $1$-form, we have 
    \begin{align*}
    	V^{-1}P^{\tau}V(u)&=V^{-1}P^{\tau}(V(u))\\
    	&=V^{-1}\left(\frac{d}{dt}+L^{\tau}\right)(\psi\circ w(t)(u))\\
    	&=V^{-1}\left(\frac{d}{dt}V\right)(u)+V^{-1}V\left(\frac{d}{dt}u\right)+V^{-1}L^{\tau}V(u)\\
    	&=V^{-1}\left(\frac{d}{dt}V\right)(u)+\frac{du}{dt}+L(u)\\
    	&=V^{-1}\left(\frac{d}{dt}V\right)(u)+P(u)\\
    	&=d\log(V)(u)+P(u).
    \end{align*}
    Therefore, the difference on the short neck is a multiplication operator 
    $$V^{-1}P^{\tau}V=P+d\log (V).$$
    For $t\in[0,1],$ consider the matrix-valued function $\psi\circ tw:S^3\times[-L,L]\to SO(n)$. It induces a map $V_t$ over $N(L)$ which is multiplication : $(\phi,a)\mapsto \psi\circ tw(\phi,a)$. This multiplication is not well defind outside $N(L)$, but the conjugation $V_t^{-1}PV_t=P+d\log(V_t)$ does extend over the whole manifold $X$, for $0\leq t\leq 1$, i.e., $P+d\log(V_t):\A\to \mathcal{C}$. This family of operators consist of the final homotopy.
    
    \begin{lemma}
    	If $L\geq L_2\geq L_1$, then the following holds: 
    	For any $(\phi,a)\in (P+d\log(V_t))^{-1}(0)$ for $0\leq t\leq 1$, if it satisfies $|\phi|_{C^0}^2\leq 2S$ and $\|a\|_{C^0}\leq 2U$, then it satisfies the stronger estimates $\|\phi\|_{C^0}^2\leq S$ and $\|a\|_{C^0}\leq U$.
    	\begin{proof}
    		Let $(\phi,a)$ be a solution to $P+d\log(V_t)=0$:
    		$$(P+d\log(V_t))(\phi,a)=V_t^{-1}P^{\tau}V_t(\phi,a)=0.$$
    		Outside $N(1)$, $(\phi,a)$ is also a solution to $P$. Over $N(L)$, on the other hand, $V_t(\phi,a)$ is a solution to $P^{\tau}$.
    		
    		Let us consider the bound of $|\phi|^2$ outside $N(1)$ and the bound of $|V_t\phi|^2$ on $N(L-1)$. First, on $N(L-1)$, $a=a_i+a_s$ splits into a sum of harmonic parts as in lemma \ref{phi leq 2S, so alsp leq S}. Since $|a_i|^2+|a_s|^2=|a|^2=|V_ta|^2=|(V_ta)_i|^2+|(V_ta)_s|^2$ and $V_t$ depends only on $t$, we have $|a_s|^2=|(V_ta)_s|^2$. On $N(L-1)$, since $V_ta$ solutes $P^{\tau}=0$, it is harmonic. Then exponential decay of $|(V_ta)_s|$ implies the exponential decay of $|a_s|$. 
    		\begin{enumerate}
    			\item If $|\phi|^2$ attains its maximum inside $N(L-1)$, say, at $x_0\in N(L-1)$, and from $|V_t\phi|^2=|\phi|^2$, we have 
    			$$0\leq \Delta|\phi|^2(x_0)\leq\Delta|V_t\phi|^2(x_0)\leq -\frac{\kappa}{2}|V_t\phi|^2-\langle (d\rho^{\tau}_{2}\wedge (V_ta))^+V_t\phi,V_t\phi\rangle.$$
    			Outside $N(2)$, the second summand vanishes and the scalar curvature is positive, so the maximum will not be obtained outside $N(2)$. Inside $N(2)$, If we stretch the neck, exponential decay of $|a_s|$ with respect to $L$ will also imply the right hand side of the inequality is negative and hence the maximum is not obtained inside $N(L-1)$.
    			\item If the maximum is obtained outside $N(L-1)$, we can use the same argument as in lemma \ref{phi leq 2S, so alsp leq S}: at the maximum point
    			$$0\leq\Delta|\phi|^2=\Delta|V_t\phi|^2\leq-\frac{\kappa}{2}|V_ta|^2-\frac{1}{2}\rho_L^{\tau}|V_t\phi|^4$$
    			implying the bound $\|V_t\phi\|^2_{C^0}\leq S$. 
    		\end{enumerate}
    		
    		To get the sharper bound on $|a|$, since $V_ta$ is harmonic on $N(L-1)$, we can apply the maximum principle to $|V_ta|^2=|a|^2$: $\|a\|_{C^0(N(L-1))}\leq\|a\|_{C^0(X')}=\|a\|_{C^0(X)}.$ Let $\beta_{\pm}$ be smooth cutoff functions supported on $X^{\pm}\setminus N(1)$ with $\beta_{\pm}(x)=1$ for $x\notin X^{\pm}\cap N(L-1)$ and $|d\beta_{\pm}|<\frac{2}{L}$ on $N(L-1)$. Then the function $\nu=1-\beta_--\beta_+$ is supported on $N(L-1)$. The estimate in proposition \ref{phi} gives
    		\begin{align*}
    			\|a\|_{C^0(X')} &\leq C_3(\|(d^*+d^+)\beta_+a\|_{L^p(X)}+\|(d^*+d^+)(\nu+\beta_-)a\|_{L^p(X)})\\
    			&\leq C_3(\|(d^*+d^+)\beta_+a\|_{L^p(X)}+\|(d^*+d^+)\nu a\|_{L^p(X)}+\|(d^*+d^+)\beta_-a\|_{L^p(X)})\\
    			&\leq C_3(\frac{1}{2}S+\|F_A^+\|_{C^0})Vol(X')^{\frac{1}{p}}+C_3(\|d\beta_{-}\|_{L^p(X)}+\|d\nu\|_{L^p(X)}+\|d\beta_{+}\|_{L^p(X)})\|a\|_{C^0(X')}\\
    			&\leq\frac{1}{2}U+8C_3L^{\frac{1}{p}-1}\operatorname{Vol}(S^3)^{\frac{1}{p}}(2U).
    		\end{align*}
    		The second summand can me made smaller than $\frac{1}{2}U$ if we take $L$ sufficiently large. Hence we obtain the desired bound $\|a\|_{C^0(X)}\leq U$.
    	\end{proof}
    \end{lemma} 
    
    Proof of Theorem \ref{gluing isomorphism}: The first two homotopies combine to give a homtopy $\mu\sim P$. The third homotopy is between $P$ and $V^{-1}P^{\tau}V$. Conjugate the first two homotopies by $V$ to get the homotopy $V^{-1}\mu_{2-t}^{\tau}V$ from $V^{-1}P^{\tau}V$ to $V^{-1}\mu^{\tau}V$. The composition of these homotopies gives the desired commutativity of the diagram \ref{homotopy commutative diagram}. 
   
	%\section{Relation with the Bauer-Furuta invariant on the double cover $\tilde{X}$}
	
	%Let $\bar{\mu}$ be the monopole map which restricted to $\mu$ on $X$
	
	%$Z_2$ equivariance of $[\mu]$ and $S^1$ equivariance of $\bar{\mu}$. We may restrict $\bar{\mu}$ to the $Z_2$ part to get someknowledg on $[\mu]$.

	%\section{More refined stable cohomotopy invariant in the case $\tilde{c}_1(X)=0$}
	%$\Z_4$ equivariance for $[\mu]$ of $X$ and $Pin(2)$ equivariance for $\bar{\mu}$ on the spin $\tilde{X}$
		
	%begin{theorem}
	%	As an application of the connected sum formula, let us compute the $Pin^{-}(2)$ Bauer-Furuta invariant of $X=K\# N$, where $N$ is an Enriques surface and $K$ is a $K3$ surface.
		
	%	$[\mu_{N}]\in \pi^2_{Z_2}(\tilde{\R}^2)$ is represented by a map $f:(\R^m\oplus\tilde{\R}^{n+2})^+\to (\R^{m+2}\oplus\tilde{\R}^{n})^+$ which is a generator. The invariant for a $K3$ surface $X$ is $[\mu_X]\in\pi^3_{S_1}(\C^2)$, represented by $g:(\R^t\oplus\C^{s+2})^+\to (\R^{t+3}\oplus\C^{s})^+$, which is also a generator. Nonequivariantly, $g$ represents the Hopf map, and the restriction $\Res^{S^1}_{\Z_2}[\mu_X]$ is the generator of $H^{3}(\R P^3;\pi^{-1}_{st}(*))=\Z_2$. It turns out that the product element $[f]\wedge \Res^{S^1}_{\Z_2}[\mu_X]\in \pi^5_{\Z_2}(\tilde{\R}^6)$ is the genrator of $\Z_2\cong H^5(\R P^5;\pi^{-1}_{st}(*))\subset\pi^4_{st}(\R P^5)$, which maps to $0$ via the comparision map. 
	%\end{theorem}
	
	\begin{example}\label{connected sum of K and N}
		As an application of the connected sum formula, consider $X=K\# N$, where $N$ is an Enriques surface and $K$ is a $K3$ surface. By Theorem \ref{Pin(2) gluing formula}, the $Pin^{-}(2)$ monopole invariant $SW^{Pin}(X)$ vanishes. The invariant $[\mu_N] \in \pi^2_{\Z_2}(\tilde{\R}^2)$ is represented by a generator $f:(\tilde{\R}^2)^+\to (\R^2)^+$. The Bauer-Furuta invariant $[\mu_K]\in\pi^3_{S_1}(\C^2)$ of $K$ is represented by a generator $g:(\R\oplus\C^2)^+\to(\R^4)^+$, which nonequivariantly corresponds to the Hopf map. Using the connected sum formula, the invariant for $X$ is given by the smash product $[\mu_X]=[\mu_N]\wedge\Res^{S^1}_{\Z_2}[\mu_K]\in\pi^5_{\Z_2}(\tilde{\R^6})$, which can be represented by $h=f\wedge\Res^{S^1}_{\Z_2}g:(\R\oplus\tilde{\R^6})^+\to(\R^6)^+$.
	\end{example}
	
	\begin{remark}
		Equivalently, the invariant for $X$ in Example \ref{connected sum of K and N} can be interpreted using the join operation $*$: By restrciting $f$ and $\Res^{S^1}_{\Z_2}g$ to the unit  spheres, we obtain $\overline{f}:S(\tilde{\R^2})\to S(\R^2)$ and $\overline{\Res^{S^1}_{\Z_2}g}:S(\R\oplus\tilde{\R^4})\to S(\R^4)$, hence $\Sigma\left(\overline{f}\wedge\overline{\Res^{S^1}_{\Z_2}g}\right) =\overline{f}*\overline{\Res^{S^1}_{\Z_2}g}:S(\R\oplus\tilde{\R^6})\to S(\R^6)$. Let $\tilde{\cdot}$ denote the corresponding element in the nonequivariant group under the isomorphism given by Proposition \ref{isomorphism between equivariant and nonequivariant}, one may determine the precise stable equivariant cohomotopy class using the following commutative diagram in the $\Z_2$ equivariant setting following the idea of \cite{furuta2007nilpotency}: 
		\begin{figure}[h]
			\centering
			\[
			\begin{tikzcd}\label{}
				\R P^{(2+4-1)} \arrow[r, "\tilde{h}"] \arrow[d, "/{((\Z_2)^2/{\Delta\Z_2})}"'] & S^{(2+3-1)} \arrow[d, "\cong"] \\
				\R P^1*\R P^3 \arrow[r, "\tilde{f}*\Res^{S^1}_{\Z_2}\tilde{g}"] \arrow[d, "\cong"']& S^1*S^3 \arrow[d,"\cong"] \\
				\Sigma(\R P^1\wedge \R P^3) \arrow[r,"\Sigma\tilde{f}\wedge\Res^{S^1}_{\Z_2}\tilde{g}"'] & \Sigma(S^1\wedge S^3)
			\end{tikzcd}
			\]
			\caption{join commutative diagram}
			\label{join commutative}
		\end{figure}
		
		Here $\tilde{h}$ represents the nonequivariant invariant of $K\# N$ and $\Sigma\tilde{f}\wedge\Res^{S^1}_{\Z_2}\tilde{g}$ represents a nontrivial class. The difficulty in determing whether $\tilde{h}$ is trivial or not, is in that the quotient map $/{((\Z_2)^2/{\Delta\Z_2})}:\R P^{(2+4-1)}\to \R P^1 *\R P^3$ is not injective and we cannot conclude that $\tilde{h}$ is nontrivial via the commutative diagram. We leave the problem as future research.
	\end{remark}
	
	%\begin{theorem}
	%	Let $(K,c_K)$ be a $K3$ surface with the canonical $Spin^{c}$ structure induced from the $Spin$ structure. For $(X,c)=(N\#2K,c_N\#2c_K)$ with the connected sum $Spin^{c-}$ structure, where $c_N$ is the structure as described in example \ref{classical example Enriques surface}, $b_1(X;l)=0, b^+(X;l)=8$ and $d=10=2\times 5$, the stable cohomotopy invariant $[\mu(X,c)]\in \pi^8_{\Z_2,H}(*;\tilde{\R}^{10})\cong\pi^7(\R P^9)$ is trivial since the group is trivial.
	%\end{theorem}

	\begin{remark}
		The example above demonstrates that there may exist manifolds whose $Pin^{-}(2)$-monopole class lies in the kernel of the Hurewicz map, which suggests the stable cohomotopy invariants are refinements of the $Pin^{-}(2)$-monopole invariants.
	\end{remark}
	
	%Consider $S^2\times S^2\# (S^2\times S^2)/I)$ with $b_1=0,b^+(X;l)=2$ and $ind(D)=3$, which is a candicate for the generator of $\pi^2_{Z_2}(\tilde{\R}^3)\cong \Z_2$ ($H^1(\R P^2;\Z)=0$). 

	%$E(N)\#_k W_i \# _l Z_j$ for $W_i=\Sigma_i\times S^2$, $g(\Sigma_i)\geq 1$, $Z_j=Y_j\times S^1$, $Y_i$ a closed $3$-manifold.
	
	%\section{$Pin^{-}(2)$ SWF}
	
	\section{Problems and Future Directions}\label{Problems and Future Directions}
	Several potential problems for further work remain:
	\begin{problem}
		Suppose $(Y,c)$ is a $3$-dimensional, oriented, closed manifold with twisted  $Spin^{c-}$ structure $c$ and $b_1(Y;l)=0$. Can we develop the corresponding  Seiberg-Witten-Floer stable homotopy type of $(Y,c)$ following the spirit of \cite{manolescu2003seiberg}?
	\end{problem}
	\begin{problem}
		For the connected sum formula, the thesis only establishes the case where both $b_1(X_1;l_1)$ and $b_2(X_2)$ vanish. What about if one of them, say, $b_1(X_1;l_1)\geq 1?$ Moreover, when both $Spin^{c-}$ structures are twisted, we have $b_1(X_1\#X_2;l_1\#l_2)=b_1(X_1;l_1)+b_2(X_2;l_2)+1$, so there exists essential difficulty in formulating the correct connected sum formula in this case. What kind of tools or theory should we develop to resolve this chanllenge?
	\end{problem}
	
	\bibliography{refers}
	%\nocite{*}
	
	\vspace{1em}
	
	\textsc{Department of Mathematics, Graduate School of Science, Kyoto University, Sakyoku, Kyoto 606–8502, Japan}\\
	
	\textit{Email address}: wu.hao.62w@st.kyoto-u.ac.jp
\end{document}